\newtheorem{teo}{Theorem}[section]
\newtheorem{lem}[teo]{Lemma}
\newtheorem{cor}[teo]{Corollary}
\newtheorem{dfn}[teo]{Definition}
\newtheorem{rk}[teo]{Remark}
\newtheorem{ex}[teo]{Example}
\def\<{\langle}
\def\>{\rangle}
\def\be{\beta}
\def\L{\Lambda}
\def\l{\lambda}
\def\s{\sigma}
\def\a{\alpha}
\def\om{\omega}
\def\e{\varepsilon}
\def\f{{\varphi}}
\def\F{{\Phi}}
\def\A{{\mathcal A}}
\def\M{{\mathcal M}}
\def\cN{{\mathcal N}}
\def\KI{{\mathcal K_{\text{I}}}}
\def\KII{{\mathcal K_{\text{II}}}}
\def\KIII{{\mathcal K_{\text{III}}}}
\def\KIV{{\mathcal K_{\text{IV}}}}
 \def\N{{\mathbb N}}
\begin{document}

\selectlanguage{english}

\title[Topological and frame properties of certain pathological $C^*$-algebras
]
{Topological and frame properties of certain pathological $C^*$-algebras
}

\author{D.V. Fufaev}

\thanks{The work was supported by the Foundation for the
Advancement of Theoretical Physics and Mathematics ``BASIS''}

\address{Moscow Center for Fundamental and Applied Mathematics,
\newline
Dept. of Mech. and Math., Lomonosov Moscow State University}

\email{denis.fufaev@math.msu.ru, fufaevdv@rambler.ru}

\begin{abstract}
We introduce a classification of locally compact Hausdorff topological spaces with respect to the behavior of $\s$-compact subsets, and relying on this classification
we study properties of corresponding $C^*$-algebras 
in terms of frame theory and the theory of $\A$-compact operators in Hilbert $C^*$-modules, some pathological examples are constructed.
\end{abstract}

\maketitle

\section*{Introduction}

Originally the concept of frames was introduced for the Hilbert spaces theory, and then by Frank and Larson in 
\cite{FrankLarson1999} and \cite{FrankLarson2002} was generalized for the case of Hilbert $C^*$-modules.
During the last decade it has been developing intensively
(see, e.g., \cite{ArBak2017}, \cite{Bak2019}).
It turns out that there is a connection between frame theory and the theory of $\A$-compact operators and uniform structures.

It is well-known that bounded linear operator in Hilbert space is compact (i.e. can be approximated in norm by operators of finite rank) if and only if it maps the unit ball to a totally bounded set with respect to the norm. For the case of Hilbert $C^*$-modules, i.e. if we consider some $C^*$-algebra instead of the scalar field $\mathbb C$ (in this case operators are called $\A$-compact) it is not true: indeed, even in the case of any infinite-dimensional unital $C^*$-alebra $\A$ the identity operator has finite rank (which is equal to one), but the unit ball is not totally bounded. So a question how to describe the $\A$-compactness in geometric terms arose.

First steps were done in papers \cite{KeckicLazovic2018}, \cite{lazovic2018}.
In \cite{Troitsky2020JMAA} by E.V. Troitsky a significant development was obtained, namely, a specific uniform structure was constructed and it was proved that if $\cN$ is a countably generated Hilbert $C^*$-module then an adjointable operator
$F:\M\to\cN$ is $\A$-compact if and only if the image of the unit ball of $\M$ is totally bounded in $\cN$ with respect to this uniform structure.

In \cite{TroitFuf2020} it was proved that $\A$-compactness of an operator implies totally boundedness of the image of the unit ball with respect to this uniform structure for any Hilbert $C^*$-module $\cN$. The inverse statemant was proved for modules $\cN$
which could be represented as an orthogonal direct summand in the standard module over the unitalization algebra $\dot{\A}$ (which is equal to $\A$ itself in unital case) for some cardinality --- that is, in the module of the form $\bigoplus_{\l\in\Lambda}\dot{\A}$. 
In particular it holds for modules which could be represented as an orthogonal direct summand in $\bigoplus_{\a\in\Lambda}{\A}$ in case when $C^*$-algebra $\A$ is countably generated as a module over itself (this in fact is equivalent to the fact that $\A$ is $\s$-unital, i.e. it has a countable approximate unit, see. \cite[Proposition 2.3]{Asadi2016}).

But in the case when $\A$ is not $\s$-unital in \cite{Fuf2021faa} a counterexample was constructed, namely, an algebra $\A$, considered as a module over itself, such that the identity operator from $\A$ into itself is not $\A$ -compact, but the unit ball is totally bounded with respect to the introduced uniform structure (and even with respect to some stronger one).
The constructed algebra is commutative, so this result
makes the study of underlying topological space
interesting.

Also it turns out that the existence of a representation of a module $\cN$ as an orthogonal direct summand in $\bigoplus_{\l\in\Lambda}\dot{\A}$ is equivalent to the existence of a standard frame in $\cN$, so there is a connection between stated problem on $\A$-compact operators and the frame theory in Hilbert $C^*$-modules, and we will study constructed $C^*$-algebras from this point of view too.

More precisely, the existence of a standard frame is sufficient to satisfy the $\A$ -compactness criterion. Hence, by proving that the criterion is not satisfied for some module, we, as a consequence, show that there is no standard frame for this module
(which, by the way, does not mean that there is no outer standard frame in sense of \cite{ArBak2017}, see \cite[Remark 3.3]{Fuf2021faa}).
In \cite{HLi2010}, \cite{Asadi2016}, \cite{AmAs2016},\cite{FrAs2020} there is also a searching for modules without frames, more precisely, for algebras $\A$ such that it is possible to say surely is there an $\A$-module without frames or not.


We introduce a scale of classes of locally compact Hausdorff topological spaces that decreases with respect to some property.

$\KI$ --- $\s$-compact spaces (i.e. spaces which could be covered by countable family of compact subsets);

$\KII$ --- non-$\s$-compact spaces which have a dense $\s$-compact subset;

$\KIII$ --- spaces in which no $\s$-compact subset is dense, that is, the complement to any $\s$-compact subset has an inner point, but the point at infinity (in a one-point compactification) may not be inner for the complement; equivalently, there exists a $\s$-compact not precompact subset;

$\KIV$ --- spaces in which the complement to any $\s$-compact subset has an inner point, and (in a one-point compactification) the point at infinity is always inner for the complement; equivalently, every $\s$-compact subset is precompact.

In \S1 some preliminaries on Hilbert $C^*$-modules, frames and topological spaces are given and some properties of $\KI$ and $\s$-unital algebras are established.

In \S2 we obtain some properties of algebras $C_0(K)$ for $K\in\KII$: we prove that such algebras never have standard frames (theorem \ref{ii_nostfr}).
Nevertheless, this does not imply any conclusion about the existence of a non-standard frame (which we will define later): it is proved that in the case of separable space it really does not exist (theorem \ref{ii_nofr}), but also an example of a space when such a frame exists is given (example \ref{ii_exfr}).
Also an example when the $\A$-compactness criterion does not hold is given, since the absence of a standard frame is not enough for this; there is also no non-standard frames for this example (example \ref{ii_exnocr}, theorems \ref{ii_nocr} and \ref{ii_nocr_nofr}).

In \S3 we study the algebras $ C_0(K) $ for the class $ \KIII $. 
This case seems to be worse (from the point of view of the behavior of $\s$-compact subsets), but the situation is better than in cases $ \KII $ and $ \KIV $: we can find an example of an algebra for which there exists a standard frame (example \ref{iii_good}), and hence the $ \A $-compactness criterion is satisfied (one can say that in this context algebraic properties ``are not monotonic with respect to topological properties''). On the other hand, there is an example for which the criterion does not hold (and so there is no standard frame), and there is also no non-standard frame (example \ref{iii_bad}, theorem \ref{iii_bad_th}). However, there is also an intermediate example, for which there is no standard frames, but a non-standard one exists (example \ref{iii_bg}).

In \S4 it is established that for the class $ \KIV $ there are no non-standard frames in the algebra $ C_0(K) $. Earlier (in \cite{Fuf2021faa}) it was proved that for this class the Troitsky's criterion never holds, which also means that there are no standard frames.

Author is grateful to E.V.Troitsky, V.M. Manuilov, K.L. Kozlov, A.Ya. Helemskii and A.I.Korchagin for helpful discussions.

\section{Preliminaries and properties of $\KI$}

Let us recall basic notions and facts about
Hilbert $C^*$-modules and operators in them, which one can find in 
\cite{Lance},\cite{MTBook},\cite{ManuilovTroit2000JMS}.

\begin{dfn}
\rm
A (right) pre-Hilbert $C^*$-module over a $C^*$-algebra $\A$
is an $\A$-module equipped with an $\A$-\emph{inner product}
$\<.,.\>:\M\times\M\to \A$ being a sesquilinear form on the
underlying linear space such that, for any $x,y\in\M$, $a\in\A$ :
\begin{enumerate}
\item $\<x,x\> \ge 0$;
\item $\<x,x\> = 0$ if and only if $x=0$;
\item $\<y,x\>=\<x,y\>^*$;
\item $\<x,y\cdot a\>=\<x,y\>a$.
\end{enumerate}

A \emph{Hilbert $C^*$-module} is a pre-Hilbert $C^*$-module over $\A$ , which 
is complete w.r.t. its norm $\|x\|=\|\<x,x\>\|^{1/2}$.

A pre-Hilbert $C^*$-module $\M$ is called \emph{countably generated}
if there is a countable collection of its elements such that 
 their $\A$-linear combinations are dense in $\M$.

The \emph{Hilbert sum} of Hilbert
$C^*$-modules in the evident sense will be denoted by $\oplus$ .
\end{dfn}

\begin{dfn}\label{dfn:operators}
\rm
An \emph{operator} is a bounded $\A$-homomorphism.
An operator having an adjoint (in an evident sense) is 
\emph{adjointable} (see \cite[Section 2.1]{MTBook}).
We will denote the Banach space of all operators
$F: \M\to \cN$ by  ${\mathbf{L}}(\M,\cN)$
and the Banach space of adjointable operators
by ${\mathbf{L}}^*(\M,\cN)$.  
\end{dfn}

\begin{dfn}\label{dfn:Acompact}
\rm
Denote by $\theta_{x,y}:\M\to\cN$, where $x\in\cN$ 
and $y\in\M$, an \emph{elementary} $\A$-\emph{compact} operator,
which is defined by formula $\theta_{x,y}(z):=x\<y,z\>$.
Then the Banach space ${\mathbf{K}}(\M,\cN)$
of $\A$-\emph{compact} operators
is the closure of the subspace generated by all 
elementary $\A$-compact operators in ${\mathbf{L}}(\M,\cN)$.
\end{dfn}

To define a uniform structure, that is, the system of pseudometric, we need to remind the notions of the multiplier algebra and multiplier module (see \cite{BakGul2002}, \cite{BakGul2004} for more details).

Recall that $M(\A)$ is a $C^*$-algebra of multipliers of $C^*$-algebra $\A$ (see \cite[Chapter 2]{Lance}, for example), $M(\A)=\A$ if $\A$ is unital. 
Also, if $K$ is locally compact Hausdorff topological space and $\A=C_0(K)$
then $M(\A)=C_b(K)$ --- the algebra of all continuous bounded functions on $K$.

For every Hilbert $\A$-module $\cN$ there exists a Hilbert $M(\A)$-module $M(\cN)$ (which is called the multiplier module of the module $\cN$) containing $\cN$ as an ideal submodule associated with $\A$, i.e. $\cN=M(\cN)\A$. Moreover, $\<x,y\>\in\A$ holds for any $x\in\cN$, $y\in M(\cN)$. $M(\cN)=\cN$ if the algebra $\A$ is unital.
Also, since each element of $x\in\cN$ can be represented as $y\cdot a$ for some $y\in\cN$, $a\in\A$ (see \cite[ 1.3.10]{MTBook}), the module $\cN$ and any its submodule can be considered as $M(\A)$-modules.

If we consider $\cN=C_0(K)$ as a module over itself, then, as in the case of the multiplier algebra, $M(C_0(K))=C_b(K)$.

The uniform structures on submodules of $\cN$ are defined as follows (see \cite{Fuf2021faa} and \cite{Troitsky2020JMAA} for details).

\begin{dfn}\label{dfn:admissyst}
\rm
Consider a Hilbert $C^*$-module $\cN$ over $\A$. A countable system $X=\{x_{i}\}$ of elements of the multiplier module $M(\cN)$ is called (outer) \emph{admissible} for a (possibly not closed) submodule $\cN^0\subseteq \cN$
(or outer $\cN^0$-\emph{admissible}), if
\begin{enumerate}
\item[1)] 
 for each $x\in\cN^0$
the series $\sum_i \<x,x_i\>\<x_i,x\>$ is convergent in $\A$;
\item[2)]
its sum is bounded by $\<x,x\>$, that is, $\sum_i \<x,x_i\>\<x_i,x\> \le \<x,x\>$; 
\item[3)]$\|x_i\|\le 1$ for any $i$.
\end{enumerate}

\end{dfn}

\begin{dfn}\rm
Denote by $\F$ a countable collection $\{\f_1,\f_2,\dots\}$
of states on $\A$ (i.e. positive linear functionals of norm 1). For each pair $(X,\F)$ 
with an outer $\cN^0$-admissible $X$,
consider a non-negative function defined by
the equality
$$
\nu_{X,\F}(x)^2:=\sup_k 
\sum_{i=k}^\infty |\f_k\left(\<x,x_i\>\right)|^2,\quad x\in \cN^0. 
$$
It can be checked that this is a seminorm on the module $\cN^0$.
Denote the system of all these functions by 
$\mathbb{OSN}(\cN,\cN^0)$. Also we will write $(X,\F)\in 
\mathbb{OA}(\cN,\cN^0)$
for pairs with outer admissible $X$. 
\end{dfn}

\begin{dfn}\label{dfn:pseme}
Consider for $(X,\F)\in \mathbb{OA}(\cN,\cN^0)$
the following function
$d_{X,\F}:\cN^0\times \cN^0\to [0,+\infty)$
$$
d_{X,\F}(x,y)^2:=\nu_{X,\F}(x-y)^2=
\sup_k 
\sum_{i=k}^\infty |\f_k\left(\<x-y,x_i\>\right)|^2,\quad x,y\in \cN^0. 
$$
We will write $d_{X,\F}\in \mathbb{OPM}(\cN,\cN^0)$. 
\end{dfn}

Evidently,
$d_{X,F}$ are \emph{pseudometrics} in sense of  \cite[Definition 2.10]{Troitsky2020JMAA} (and \cite[Chapter IX, \S 1]{BourbakiTop2}), so they form a uniform structure.

If $X$ contains only elements of the module $\cN$, the word ``outer'' is not used, and in this case one may write $\mathbb{SN}$, $\mathbb{A}$ and $\mathbb{PM}$ instead of $\mathbb{OSN}$, $\mathbb{OA}$ and $\mathbb{OPM}$.

The definition of \emph{totally bounded} sets 
for the uniform structure under consideration
(or for the system $\mathbb{OPM}(\cN,\cN^0)$) takes the following form.

\begin{dfn}\label{dfn:totbaundset}
\rm
A set $Y\subseteq \cN^0 \subseteq  \cN(\subseteq  M(\cN))$ is \emph{totally bounded}
with respect to this
uniform structure, if for any $(X,\F)$, 
where $X \subseteq  M(\cN)$ is outer $\cN^0$-admissible,
and any 
$\e>0$ there exists a finite collection $y_1,\dots,y_n$
of elements of $Y$ such that the sets
$$
\left\{ y\in Y\,|\, d_{X,\F}(y_i,y)<\e\right\}
$$  
form a cover of $Y$. This finite collection is called an 
$\e$\emph{-net in $Y$ for} $d_{X,\F}$.

If so, we will say that $Y$ is \emph{externally} $(\cN,\cN^0)$-\emph{totally bounded} (or $(M(\cN),\cN^0)$-totally bounded).
\end{dfn}

In these terms the $\A$-compactness of operators for some class of modules can be describe by following:

\begin{teo}(\cite[Theorem 3.5]{TroitFuf2020})
Suppose, $\M$, $\cN$ and ${\mathcal K}$ are Hilbert $\A$-modules, $\cN\oplus{\mathcal K}\cong\bigoplus\limits_{\l\in\Lambda}\dot{\A}$ for some $\L$,
$F:\M\to\cN$ is an adjointable operator
and $F(B)$ is $(\cN,\cN)$-totally bounded, where $B$ is the unit ball of $\M$.
Then $F$
is $\A$-compact as an operator from $\M$ to $\cN$.
\end{teo}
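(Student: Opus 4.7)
The plan is to extract a standard frame for $\cN$ from the orthogonal summand hypothesis, use it to construct explicit $\A$-compact approximants of $F$, and then show they converge to $F$ in norm by exploiting the totally bounded hypothesis on $F(B)$. Namely, let $P\colon\bigoplus_{\l\in\L}\dot{\A}\to\cN$ be the projection coming from $\cN\oplus{\mathcal K}\cong\bigoplus_\l\dot{\A}$, let $\{e_\l\}$ be the canonical basis of the standard module, and set $y_\l:=P(e_\l)\in M(\cN)$. Then $\|y_\l\|\le 1$ and one has the Parseval-type reconstruction $\<x,x\>=\sum_\l\<x,y_\l\>\<y_\l,x\>$ for every $x\in\cN$.

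For each finite $S\subset\L$ define
$$
F_S:=\sum_{\l\in S}\theta_{y_\l,F^*(y_\l)},\qquad F_S(m)=\sum_{\l\in S}y_\l\<y_\l,F(m)\>,
$$
a finite sum of elementary operators and hence $\A$-compact. Writing $a_\l(m):=\<y_\l,F(m)\>$ and observing that $F(m)-F_S(m)=P\bigl(\sum_{\l\notin S}e_\l a_\l(m)\bigr)$ with $P$ a contraction, proving $\|F-F_S\|\to 0$ along the directed set of finite $S\subset\L$ reduces to the uniform tail estimate
$$
\sup_{m\in B}\Bigl\|\sum_{\l\notin S}a_\l(m)^*a_\l(m)\Bigr\|\to 0.
$$

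To obtain this uniform bound I would invoke the total boundedness of $F(B)$ with a carefully chosen $(X,\F)$. Take $X$ to be a countable subfamily of $\{y_\l\}$ capturing the essential action of $F$ on $B$ (only countably many $a_\l(m)$ are nonzero for each $m$, and a separability-type argument reduces the relevant index set on all of $F(B)$), rescaled if necessary to remain $\cN$-admissible, and let $\F$ be a suitable family of states on $\A$. A finite $\e$-net $F(m_1),\dots,F(m_N)$ from $(\cN,\cN)$-total boundedness of $F(B)$ yields pointwise smallness of the tails $\sum_{\l\notin S}a_\l(m_j)^*a_\l(m_j)$ for large $S$ (by the Parseval identity applied at each $m_j$), and the pseudometric estimate $d_{X,\F}(F(m),F(m_j))<\e$ is designed to propagate this smallness to arbitrary $m\in B$.

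The main obstacle is precisely this propagation, because $d_{X,\F}$ involves only scalar quantities $|\varphi_k(\<\cdot,x_i\>)|$ assembled via the subtle $\sup_k\sum_{i\ge k}$ pattern, whereas what must be controlled is the $C^*$-norm of an operator-valued positive sum $\sum a_\l^*a_\l$. The bridge is Cauchy--Schwarz for states, $|\varphi(a)|^2\le\varphi(a^*a)$, together with a sufficiently rich $\F$ (for instance vector states drawn from GNS representations that realize the norms in question). An alternative and perhaps cleaner route is a contrapositive argument: assuming the uniform tail bound fails, extract a sequence $m_n\in B$ whose images have non-decaying tails, then construct $X$ and $\F$ exhibiting a positive lower bound on $d_{X,\F}(F(m_n),F(m_{n'}))$ for $n\ne n'$, contradicting total boundedness.
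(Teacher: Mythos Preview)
The paper does not prove this theorem at all: it is quoted verbatim from \cite[Theorem~3.5]{TroitFuf2020} and is used only as background. There is therefore no ``paper's own proof'' to compare your proposal against.

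That said, a few remarks on the proposal itself. The overall architecture --- extract the normalized tight frame $\{y_\l\}=\{P(e_\l)\}$ from the direct-summand hypothesis, form the finite-rank approximants $F_S=\sum_{\l\in S}\theta_{y_\l,F^*y_\l}$, and prove $\|F-F_S\|\to 0$ by a contrapositive argument producing an explicit $(X,\Phi)$ that witnesses non-total-boundedness --- is exactly the strategy of the cited references \cite{Troitsky2020JMAA,TroitFuf2020}. Two points, however, are not yet proofs.

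First, the phrase ``a separability-type argument reduces the relevant index set on all of $F(B)$'' is the crux and is not justified. An admissible system $X$ must be \emph{countable}, but $\Lambda$ is arbitrary and $F(B)$ need not be separable; one cannot simply pick a countable subfamily of $\{y_\l\}$ a priori. In the actual proofs one builds the countable $X$ \emph{inductively} from the assumed failure of the tail estimate: choose $m_1$ with a bad tail, record the finitely many indices needed to witness that tail being large, choose $m_2$ with a bad tail outside those indices, and so on. This diagonal construction is what produces a single countable $X$ against which the sequence $\{F(m_n)\}$ is $\varepsilon_0$-separated.

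Second, the ``propagation'' you flag is not handled by Cauchy--Schwarz for states alone. The pseudometric $d_{X,\Phi}$ is deliberately weaker than the norm of the tail $\sum a_\l^*a_\l$; the point is the converse direction, namely that a non-decaying \emph{norm} tail lets you manufacture states $\varphi_k$ (e.g.\ pure states nearly attaining the norm of the $k$-th tail block) so that $d_{X,\Phi}$ stays bounded below on the constructed sequence. Your contrapositive sketch gestures at this but does not carry it out; as written, the proposal stops precisely where the work begins.
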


The inverse statement holds for arbitrary modules (\cite[Theorem 2.4]{TroitFuf2020}). For the case when $\cN$ is a countably generated module a similar result was stated and proved as criterion of $\A$-compactness by E.V. Troitsky (\cite[Theorem 2.13]{Troitsky2020JMAA}).

Evidently $(M(\cN),\cN^0)$-totally bounded set is $(\cN,\cN^0)$-totally bounded, so all results which state that $(\cN,\cN)$-totally boundedness of the image of the unit ball implies $\A$-compactness of corresponding adjointable operator are still valid if we consider $(M(\cN),\cN)$-totally boundedness.

Let us recall a notion of a frame in Hilbert $C^*$-module (see, e.g., \cite{FrankLarson1999}, \cite{FrankLarson2002}).
Among all frames, standard ones are also considered.

\begin{dfn}\label{dfn:fr}
Let $\cN$ be a Hilbert $C^*$-module over an unital $C^*$-algebra $\A$ and $J$ be some set. A family $\{x_j\}_{j\in J}$ of elements of $\cN$ is said to be a standard frame in $\cN$ if there exist positive constants $c_1, c_2$ such that
for any $x\in\cN$
the series 
$\sum\limits_j \<x,x_j\>\<x_j,x\>$
converges in norm in $\A$
and the following inequalities hold:

 $$c_1\<x,x\>\le \sum\limits_j \<x,x_j\>\<x_j,x\>\le c_2\<x,x\>.$$
A frame is called tight if $c_1=c_2$ and normalized if $c_1=c_2=1$.
If the series converges only in the ultraweak topology (also known as $\s$-weak) to some element of the universal enveloping von Neumann algebra $\A''$, then the frame is said to be non-standard. 
Unlike the case of a standard frame, in this case
the number of nonzero elements of the series can be uncountable, the convergence in this case is considered as the convergence of a net consisting of all finite partial sums (see remarks before \cite[1.2.19]{KadRin1} and \cite[5.1.5]{KadRin1}).
We will write just ``frame'' if it is at least non-standard.
\end{dfn}

If the algebra $\A$ is not unital, then $\cN$ can be considered as a module over its unitalization $\dot{\A}$ and frame can be defined in $\cN$ as in an $\dot{\A}$-module.
Further we will assume that frames are defined in this way.

\begin{rk}
For a system $\{x_j\}_{j\in J}$ there is a connection between the so-called reconstruction formula and the property of being a frame: if for any $x\in\cN$ it holds that
$x=\sum\limits_{j\in J}x_j\<x_j,x\>$,
then $\{x_j\}_{j\in J}$ is a normalized frame, and the convergence takes place in the same sense (see \cite[Example 3.1]{FrankLarson2002})
\end{rk}

\begin{rk}\label{rk_bes}
If there exists a positive constant $c$ such that for any $x\in\cN$ and
for any partial sum of the considered series the inequality
$\sum\limits_j \<x,x_j\>\<x_j,x\>\le c\<x,x\>$ holds (that is, the right side of the inequality for the frame), then the system $\{x_j\} _{j\in J}$ is called the Bessel system. Due to \cite[2.4.19]{BratRob} the series with respect to the Bessel system always converges in the ultrastrong topology and, as a consequence, also in the ultraweak one.
\end{rk}

Recall the following characterization of non-standard frames:

\begin{lem}\label{fr_cr}
(\cite[Proposition 3.1]{HLi2010}) 
A system $\{x_j\}_J$ is a frame in $\cN$ if and only if there exist positive constants $c_1, c_2$ such that for any $x\in\cN$ and any state $\f$ on $\A$
the following inequalities hold:

$$
c_1\f(\<x,x\>)\le
\sum\limits_j \f(\<x,x_j\>\<x_j,x\>)\le
c_2\f(\<x,x\>)
$$

\end{lem}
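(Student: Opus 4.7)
The plan is to exploit the standard correspondence between the ultraweak topology on $\A''$ and states on $\A$: each state $\f$ on $\A$ extends uniquely to a normal state $\tilde\f$ on $\A''$ (sitting in the predual), the predual of $\A''$ is spanned by such extensions (via the polar decomposition of normal functionals), and consequently ultraweak convergence of a net in $\A''$ is equivalent to convergence of its image under every such $\tilde\f$. Moreover, since normal states separate self-adjoint elements of $\A''$, the operator inequalities in Definition \ref{dfn:fr} are equivalent to their scalar counterparts tested against all states. This equivalence is precisely what bridges the frame definition to the statement of the lemma.

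For the forward implication, I would assume $\{x_j\}_J$ is a frame with bounds $c_1,c_2$, so that for each $x\in\cN$ the net of finite partial sums $S_F(x):=\sum_{j\in F}\<x,x_j\>\<x_j,x\>$, indexed by finite $F\subseteq J$ ordered by inclusion, converges ultraweakly to some $T(x)\in\A''$ with $c_1\<x,x\>\le T(x)\le c_2\<x,x\>$. Applying the extension $\tilde\f$ to both the convergence and the inequalities (ultraweak continuity of $\tilde\f$, positivity of $\tilde\f$) immediately yields the required scalar double inequality; one only needs to observe that $\tilde\f(S_F(x))$ is a non-decreasing net of non-negative reals whose net-limit coincides with the unordered sum $\sum_j \f(\<x,x_j\>\<x_j,x\>)$ (for each fixed $\f$, only countably many summands are non-zero).

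For the converse, given the scalar bounds, the net $\{S_F(x)\}_F$ is increasing and uniformly norm-bounded: the bound $\|S_F(x)\|\le c_2\|x\|^2$ follows on taking the supremum of $\f(S_F(x))\le c_2\f(\<x,x\>)$ over states on $\A$. I would then invoke Vigier's theorem (cf.\ \cite[2.4.19]{BratRob}) to conclude that the net has an ultrastrong, hence ultraweak, limit $T(x)\in\A''$ equal to its supremum. To extract the operator inequalities from the scalar ones, I would evaluate against every $\tilde\f$, obtaining $\tilde\f(T(x))=\sum_j\f(\<x,x_j\>\<x_j,x\>)\in[c_1\f(\<x,x\>),c_2\f(\<x,x\>)]$, and conclude $c_1\<x,x\>\le T(x)\le c_2\<x,x\>$ by the separating property of normal states on $\A''$.

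The main subtlety, rather than any particular calculation, lies in keeping the topology clean: the ``sum'' in the frame definition is the ultraweak net-limit of finite partial sums over a possibly uncountable index set $J$, and one must be careful not to conflate it with a norm-convergent series or to tacitly assume $J$ countable. Once the predual/state dictionary is set up and Vigier's theorem is available, both directions reduce to formal applications of positivity and normality; the nontrivial inputs are precisely these two structural facts about $\A''$.
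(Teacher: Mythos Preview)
The paper does not supply its own proof of this lemma: it is stated as a recalled characterization, with citation to \cite[Proposition 3.1]{HLi2010}, and is used thereafter as a black box. So there is nothing in the paper to compare your argument against.

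That said, your argument is correct and is essentially the standard one. The two structural facts you isolate are exactly what is needed: (i) the predual of the universal enveloping von Neumann algebra $\A''$ is $\A^*$, so states on $\A$ extend uniquely to normal states on $\A''$ and these extensions separate self-adjoint elements; (ii) Vigier's theorem (which the paper itself invokes elsewhere as \cite[2.4.19]{BratRob}) gives ultrastrong convergence of the increasing, norm-bounded net of finite partial sums. Your handling of the possibly uncountable index set---treating the sum as a net of finite partial sums and noting that for each fixed $\f$ only countably many terms are non-zero---is also correct, and the bound $\|S_F(x)\|\le c_2\|x\|^2$ indeed follows from $\sup_\f \f(a)=\|a\|$ for positive $a$ together with the fact that each finite partial sum is dominated by the full scalar sum.
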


From this property and the definition of a standard frame it is obvious that any frame in the algebra $C_0(K)$ as a module over itself must separate points of the space $K$.

In our context the main structural result of the frame theory is the following:
from the results of Frank and Larson (\cite[3.5, 4.1 and 5.3]{FrankLarson2002}, see also \cite[Theorem 1.1]{HLi2010}) it follows that the Hilbert $C^*$-module
$\cN$ over the $C^*$-algebra $\A$ can be represented as an orthogonal direct summand in the standard module of some cardinality over the unitalization algebra $\bigoplus\limits_{\l\in\Lambda}\dot{\A}$ if and only if there exists a standard frame in $\cN$.

Kasparov's stabilization theorem (\cite{Kasp}, or \cite[Theorem 1.4.2]{MTBook}) implies that every countably generated module has a standard frame. The construction of a module that does not have a standard frame will show that the stabilization property of the Kasparov type is not satisfied for this module.

Note that frames are well-defined even for the case when the algebra is not unital,
but to use this stabilization property we need to take its unitalization.

\begin{rk}
If an algebra $\A$ has a standard frame as a module over itself, then any $\A$-module $\cN$ which can be represented as an orthogonal direct summand in the standard module $\bigoplus\limits_{\l\in\Lambda}{\A}$ also has a standard frame (and hence can be represented as an orthogonal direct summand in the standard module $\bigoplus\limits_{\l\in\Lambda'}\dot{\A}$). For some special case it was noted in \cite[remark 3.6]{TroitFuf2020}.
\end{rk}

Let us recall now some preliminaries from topology.

For a locally compact Hausdorff topological space $K$ we denote by $\a{K}=K\cup\{t_\infty\}$ its one-point compactification (see \cite[29.1]{Munkr}). We will also often use the fact that the $C^*$-algebra $C_0(K)$ of continuous functions vanishing at infinity (see \cite[436I]{Frem4}) is isomorphic to an ideal in $C^*$ -algebra $C(\a{K})$ consisting of functions vanishing at the point $t_\infty$ (\cite[Lemma 3.44]{Weaver}).
We denote by $\be K$ the Stone-\v{C}ech compactification of $K$; it is known that $C_b(K)\cong C(\be K)\cong M(C_0(K))$ (see \cite[Chapter 1]{Walker}).

\begin{lem}\label{extrest} (\cite[Corollary 1.3]{Fuf2021faa})
Let $K$ be a locally compact Hausdorff space, $A$ be a closed subset of $K$ and
$f\in C_0(A)$,
then $f$ can be extended to a function from $C_0(K)$. Moreover, $f$ is bounded and the extended function can be chosen to be bounded by the same constant.
In particular, for every compact set $K'\subset K$ there is a function $g\in C_0(K)$ such that $g=1$ on $K'$ and $|g|\le1$ on $K$ .
\end{lem}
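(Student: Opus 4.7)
The plan is to reduce the statement to the classical Tietze extension theorem by passing to the one-point compactification $\alpha K=K\cup\{t_\infty\}$, which is a compact (hence normal) Hausdorff space. Since $A$ is closed in $K$, the set $A\cup\{t_\infty\}$ is closed in $\alpha K$: its complement $K\setminus A$ is open in $K$, and every open subset of $K$ remains open in $\alpha K$.

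Next, I would define $\tilde f\colon A\cup\{t_\infty\}\to\mathbb{C}$ by $\tilde f|_A=f$ and $\tilde f(t_\infty)=0$ and check continuity at $t_\infty$. Basic neighborhoods of $t_\infty$ in $\alpha K$ have the form $\{t_\infty\}\cup(K\setminus C)$ with $C\subseteq K$ compact. Given $\varepsilon>0$, the set $C_\varepsilon=\{x\in A:|f(x)|\ge\varepsilon\}$ is compact in $A$ (this is the defining property of $C_0(A)$), hence compact in $K$, so $\{t_\infty\}\cup(A\setminus C_\varepsilon)$ is a neighborhood of $t_\infty$ in $A\cup\{t_\infty\}$ on which $|\tilde f|<\varepsilon$. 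Thus $\tilde f$ is continuous on the closed set $A\cup\{t_\infty\}\subseteq\alpha K$, with $\|\tilde f\|_\infty=\|f\|_\infty$.

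Now I would apply the (bounded) Tietze extension theorem in $\alpha K$ to obtain $F\in C(\alpha K)$ extending $\tilde f$ with $\|F\|_\infty\le\|\tilde f\|_\infty=\|f\|_\infty$. Because $F$ continuously extends $\tilde f$ and $\tilde f(t_\infty)=0$, automatically $F(t_\infty)=0$. Restricting $F$ to $K$ yields a function in $C_0(K)$ (by the identification of $C_0(K)$ with the ideal of functions vanishing at $t_\infty$ in $C(\alpha K)$ recalled just above the lemma) that extends $f$ and satisfies the same norm bound.

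For the ``in particular'' statement I would set $A=K'$ and $f\equiv 1$ on $K'$; since $K'$ is compact, $f\in C(K')=C_0(K')$ with $\|f\|_\infty=1$, so the first part produces the desired $g\in C_0(K)$ with $g|_{K'}=1$ and $|g|\le 1$ on $K$. The only delicate point in the whole argument is the continuity of $\tilde f$ at $t_\infty$, which is where the hypothesis $f\in C_0(A)$ (rather than merely $C_b(A)$) is essential; the rest is a direct invocation of Tietze.
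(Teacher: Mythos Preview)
Your argument is correct. Note, however, that the paper does not supply its own proof of this lemma: it is quoted verbatim from \cite[Corollary 1.3]{Fuf2021faa} and used as a black box, so there is no in-paper proof to compare against. The route you take---passing to the one-point compactification, adjoining the value $0$ at $t_\infty$, checking continuity there via the compactness of the level sets $\{|f|\ge\varepsilon\}$, and then invoking Tietze on the compact (hence normal) space $\alpha K$---is a standard and clean way to establish the result, and it makes transparent exactly where the hypothesis $f\in C_0(A)$ (as opposed to $C_b(A)$) is used. The only point worth a word is that the bounded Tietze theorem is usually stated for real-valued functions; to keep the same sup-norm bound in the complex case one extends real and imaginary parts separately and then composes with the radial retraction onto the closed disk of radius $\|f\|_\infty$, which is routine.
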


We need the following useful examples of topological spaces:
$[0,\om_1]$ --- the space of all ordinals $\a$ such that $\a\le\om_1$ with order topology, where $\om_1$ is the first uncountable ordial; this space is uncountable. Also, $[0,\om_1)\in\KIV$.
$[0,\om_0]$ is the space of all ordinals $\a$ such that $\a\le\om_0$ with order topology, where $\om_0$ is the first infinite ordial (this space is homeomorphic to $\{\frac{1}{n}\}_{n\in\N}\cup \{0\}$, or $\N\cup\{\infty\}$). See \cite{Fuf2021faa} or \cite[chapters VI-VII]{KurMos} and \cite[3.1.27]{Engel} for details.

The following well-known for specialists statement is useful for our goal.

\begin{lem}
The $C^*$-algebra $C_0(K)$ is $\s$-unital if and only if $K$ is $\s$-compact.
\end{lem}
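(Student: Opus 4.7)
The plan is to prove both implications by exploiting the Lemma \ref{extrest} in one direction and the pointwise consequence of the approximate unit property in the other.

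For the forward direction, assume $K$ is $\sigma$-compact, so $K=\bigcup_n C_n$ with each $C_n$ compact. Using local compactness and the Hausdorff property, I would replace this sequence by a nested one $K_n$ with $K_n\subset \mathrm{int}\,K_{n+1}$ and $\bigcup_n K_n=K$. Then Lemma \ref{extrest} produces $e_n\in C_0(K)$ with $e_n=1$ on $K_n$ and $|e_n|\le 1$ on $K$ (which one may further arrange to be a positive contractive increasing sequence by a standard trick, e.g.\ replacing $e_n$ by $|e_n|\wedge e_{n+1}$ or by using the functional calculus). To verify that $\{e_n\}$ is an approximate unit, fix $f\in C_0(K)$ and $\varepsilon>0$; since $f$ vanishes at infinity, the set $L=\{x\in K : |f(x)|\ge\varepsilon\}$ is compact, hence contained in some $K_n$, and for all $m\ge n$ one has $f(x)e_m(x)=f(x)$ on $L$ and $|f(x)-f(x)e_m(x)|\le 2|f(x)|<2\varepsilon$ off $L$. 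Thus $\|f-fe_m\|\to 0$, showing $C_0(K)$ is $\sigma$-unital.

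For the converse, assume $\{e_n\}_{n\in\N}$ is a countable approximate unit in $C_0(K)$, and set
$$
L_n:=\{x\in K : |e_n(x)|\ge 1/2\}.
$$
Each $L_n$ is closed, and because $e_n\in C_0(K)$ the set $L_n$ is compact. It remains to check $K=\bigcup_n L_n$. Fix $x\in K$; by Lemma \ref{extrest} there exists $f\in C_0(K)$ with $f(x)=1$ and $0\le f\le 1$. The approximate-unit property gives $\|e_nf-f\|\to 0$, whence $e_n(x)f(x)\to f(x)$, i.e.\ $e_n(x)\to 1$. In particular $|e_n(x)|\ge 1/2$ for some $n$, so $x\in L_n$. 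This exhibits $K$ as a countable union of compact sets.

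There is no real obstacle here: the forward direction is essentially the standard Urysohn-type construction combined with the vanishing-at-infinity property, and the converse is a pointwise evaluation argument using that elements of $C_0(K)$ have compact sublevel sets $\{|g|\ge \varepsilon\}$. The only mild care needed is in the forward direction to upgrade the cutoff functions obtained from Lemma \ref{extrest} into a genuine (increasing, positive, contractive) approximate unit, but this is routine given nested compact exhaustions.
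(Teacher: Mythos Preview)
Your proof is correct, but it takes a somewhat different route from the paper's. The paper invokes the standard equivalence between $\sigma$-unitality and the existence of a strictly positive element: for $\sigma$-compact $\Rightarrow$ $\sigma$-unital it builds the single function $\sum_n 2^{-n}|f_n|$ (with $f_n=1$ on $K_n$) and observes it is everywhere positive; for the converse it takes a strictly positive $f$ and covers $K$ by the compacts $\{|f|\ge 1/n\}$. You instead work directly with the approximate-unit definition: you manufacture a sequential approximate unit from nested cutoff functions, and in the other direction you use pointwise evaluation on an approximate unit $(e_n)$ to cover $K$ by $\{|e_n|\ge 1/2\}$. The paper's argument is shorter because the strictly-positive-element criterion packages the analysis into one function and avoids the mild bookkeeping you flag (arranging a nested exhaustion $K_n\subset\mathrm{int}\,K_{n+1}$ and upgrading the $e_n$ to an increasing positive sequence). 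Your argument, on the other hand, is more self-contained in that it does not appeal to that auxiliary characterization. Both the nested-exhaustion step and the ``standard trick'' you mention are indeed routine (e.g.\ replace $e_n$ by $\max(|e_1|,\dots,|e_n|)$), so there is no gap.
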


\begin{proof}
Recall that $\s$-unitality is equivalent to the existence of a strictly positive element, in our case --- an everywhere positive function.

If the algebra $C_0(K)$ is $\s$-unital, then it contains an everywhere positive function $f$. The sets $\{t\in K:|f(t)|\ge\frac{1}{n}\}$ are compact (as closed subsets of such corresponding compact sets outside of which $|f(t)|<\frac {1}{n}$), and therefore form a countable cover of $K$ by compacts.

Conversely, if there are compact sets $K_n$ such that $K=\bigcup\limits_{n\in\N}K_n$, then there exist functions $f_n\in C_0(K)$ such that $f_n(t)=1 $ on $K_n$ and $|f_n(t)|\le1$ everywhere on $K$. The function $\sum\limits_{n\in\N}\frac{1}{2^n}|f_n(t)|$ is everywhere positive on $K$, since for any point there is a compact set $K_n$ in which it is contained.
\end{proof}

\begin{lem}\label{frp1}
Let $K$ be a locally compact, not $\s$-compact space. Then for any countable family of functions $\{f_n\}\subset C_0(K)$ the set $F=\{t\in K: f_n(t)=0\,\, \forall n\in\N\}$ is non-empty, and even uncountable.

In particular, a countable family of functions cannot separate points of the space $K$.
\end{lem}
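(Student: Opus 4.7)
The plan is to show that the union of the ``supports'' (in the sense of cozero sets) of the $f_n$ is $\s$-compact, hence cannot exhaust $K$, and moreover its complement cannot even be countable.

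First I would fix, for each $n$, the set $U_n := \{t\in K : f_n(t)\ne 0\}$ and write
$$
U_n \;=\; \bigcup_{k\in\N} \Big\{\,t\in K : |f_n(t)|\ge \tfrac{1}{k}\,\Big\}.
$$
Each set in this union is compact: it is a closed subset of $K$, and by the defining property of $C_0(K)$ (see Lemma \ref{extrest} and the discussion around it) the level set $\{|f_n|\ge 1/k\}$ sits inside the compact set $\{|f_n|\ge 1/(k+1)\}$, or more directly it is compact because $f_n$ extends continuously to the one-point compactification $\a{K}$ by vanishing at $t_\infty$, making $\{|f_n|\ge 1/k\}$ a closed subset of the compact $\a{K}$ avoiding $t_\infty$. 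Hence each $U_n$ is $\s$-compact, and so is
$$
S \;:=\; \bigcup_{n\in\N} U_n \;=\; \{\,t\in K : \exists\, n\ \text{with}\ f_n(t)\ne 0\,\}.
$$

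Next, note $F = K\setminus S$. If $F$ were empty, then $K=S$ would be $\s$-compact, contradicting the hypothesis. More strongly, if $F$ were at most countable, $F=\{t_i\}_{i\in\N}$, then since $K$ is Hausdorff each singleton $\{t_i\}$ is compact, so
$$
K \;=\; S \cup \bigcup_{i\in\N} \{t_i\}
$$
would again be a countable union of compact subsets, hence $\s$-compact --- a contradiction. Therefore $F$ must be uncountable.

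For the final assertion, pick any two distinct points $s,t\in F$ (which exist by the previous step). By definition of $F$, $f_n(s)=0=f_n(t)$ for every $n$, so $\{f_n\}_{n\in\N}$ does not separate $s$ from $t$. The step most worth double-checking is the compactness of the level sets $\{|f_n|\ge 1/k\}$, but this is standard for $C_0(K)$; everything else is a bookkeeping argument about countable unions of compact sets.
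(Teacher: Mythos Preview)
Your proof is correct. Both your argument and the paper's reach the same contradiction---that $K$ would be $\sigma$-compact---but by slightly different mechanisms. The paper assumes $F$ is at most countable, say $F=\{z_n\}$, chooses $g_n\in C_0(K)$ with $g_n(z_n)=1$ and $|g_n|\le 1$, and forms the single function
\[
g(t)=\sum_{n}\frac{1}{2^n}\frac{|f_n(t)|}{1+|f_n(t)|}+\sum_{n}\frac{1}{2^n}|g_n(t)|,
\]
which is everywhere positive on $K$; the preceding lemma (that $C_0(K)$ is $\sigma$-unital, i.e.\ has a strictly positive element, iff $K$ is $\sigma$-compact) then yields the contradiction. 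Your route is more direct and purely topological: you exhibit $K\setminus F$ explicitly as the countable union of the compact level sets $\{|f_n|\ge 1/k\}$ and then absorb a countable $F$ into this union of compacta. This avoids both the auxiliary function $g$ and the appeal to the $\sigma$-unital characterization, at the small cost of verifying compactness of the level sets---which you do correctly via the extension to $\alpha K$.
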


\begin{proof}
Indeed, if the set $F$ is countable and equals to $\{z_n\}_{n\in \N}$, then for every $n\in\N$ there exists a function $g_n\in C_0(K)$ such that $ g_n(z_n)=1$, $|g_n(t)|\le1$, 
and then the function $g(t)=\sum\limits_{n\in\N}\frac{1}{2^n}\frac{|f_n(t)|}{1+|f_n(t)|}+\sum\limits_{n\in\N}\frac{1}{ 2^n}|g_n(t)|$ is everywhere positive on $K$, which contradicts the fact that $K$ is not $\s$-compact.

The proof is also valid for the cases when $F$ is finite or empty.
\end{proof}

\begin{lem}\label{frp}
Let $K$ be a locally compact, not $\s$-compact space, $\{K_n\}_{n\in\N}$ be a countable collection of compact sets from $K$. Then for any countable family of functions $\{f_n\}\subset C_0(K)$ the set $F=\{t\in K\setminus \bigcup\limits_{n\in\N}K_n: f_n(t)=0 \,\, \forall n\in\N\}$ is non-empty, and even uncountable.

\end{lem}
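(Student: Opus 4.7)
The plan is to reduce this lemma directly to Lemma \ref{frp1} by enlarging the countable family of functions so that the additional functions automatically cut out $\bigcup_{n\in\N} K_n$.

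First, I would invoke Lemma \ref{extrest}: for each compact $K_n$, pick a function $h_n\in C_0(K)$ satisfying $h_n\equiv 1$ on $K_n$ and $|h_n|\le 1$ on $K$. The key observation is the contrapositive: if $h_n(t)=0$ then $t\notin K_n$, since $h_n$ takes the value $1$ on $K_n$. Consequently, the set of points at which \emph{all} functions $h_n$ vanish is contained in $K\setminus\bigcup_{n\in\N}K_n$.

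Next, consider the combined countable family $\{f_n\}_{n\in\N}\cup\{h_n\}_{n\in\N}\subset C_0(K)$, which is still countable. By Lemma \ref{frp1}, the common zero set
\[
F'=\{t\in K : f_n(t)=0 \text{ and } h_n(t)=0 \text{ for all } n\in\N\}
\]
is uncountable (and in particular nonempty). By the observation in the previous paragraph, every $t\in F'$ lies in $K\setminus\bigcup_{n\in\N}K_n$, and it certainly satisfies $f_n(t)=0$ for all $n$. Therefore $F'\subseteq F$, so $F$ is uncountable.

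There is no substantial obstacle here; the only point that requires care is noting that the family obtained by adjoining the $h_n$ remains countable (so that Lemma \ref{frp1} applies), and that vanishing of the $h_n$ is strong enough to force the point to avoid $\bigcup_n K_n$ (which is why we use $h_n=1$ on $K_n$ rather than merely $h_n\ne 0$). No additional hypothesis on the $K_n$ (such as precompactness of their union, which could fail) is needed.
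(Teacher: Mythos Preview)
Your proof is correct and is essentially the paper's own argument: both introduce auxiliary functions $h_n\in C_0(K)$ with $h_n\equiv 1$ on $K_n$ and $|h_n|\le 1$ to handle the compact sets. The only difference is organizational---the paper inlines the construction from Lemma~\ref{frp1}, building the single function $g(t)=\sum_n 2^{-n}\frac{|f_n(t)|}{1+|f_n(t)|}+\sum_n 2^{-n}|g_n(t)|+\sum_n 2^{-n}h_n(t)$ directly, whereas you adjoin the $h_n$ to the family $\{f_n\}$ and invoke Lemma~\ref{frp1} as a black box; the underlying idea is identical.
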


\begin{proof}
Similarly to the previous lemma, consider the function $g(t)=\sum\limits_{n\in\N}\frac{1}{2^n}\frac{|f_n(t)|}{1+|f_n(t) |}+\sum\limits_{n\in\N}\frac{1}{2^n}|g_n(t)|+\sum\limits_{n\in\N}\frac{1}{2^ n}h_n(t)$, where $h_n\in C_0(K)$ is a non-negative function which equals to 1 on $K_n$ and bounded by 1.
\end{proof}

It is known (see \cite[Example 3.5]{FrankLarson2002}) that every $\s$-unital algebra as a module over itself has a normalized standard frame.
Let us construct an example of a non-$\s$-unital algebra that has a frame with the same properties.

Recall that $\A=c_0-\sum\limits_{\l\in\L}\A_\l$ is a $c_0$-direct sum of the algebras $\A_\l$ (see \cite[\S 1.4]{Averson} or \cite[\S 1]{FrM2020}), the elements $x=(x_\l)_{\l\in\L}$ of this sum are enumerated by indices from $\L$ sets such that $x_\l\in\A_\l$, 
there are at most countable set $\{x_{\l_m}\}_{m\in\N}$ of non-zero elements and $\lim\limits_{m\to\infty}||x_{\l_m}||_{\A_{\l_m}}
=0$.
The norm in this algebra is given by the formula $||x||_\A=\sup\limits_{\l\in\L}||x_\l||_{\A_\l}$.
In fact, this algebra is obtained by completing an algebra whose elements are non-zero only for a finite number of indices.

\begin{teo}\label{iii_stfr}
Let $\A=c_0-\sum\limits_{\l\in\L}\A_\l$,
where each $\A_\l$ has a normalized standard frame for which the reconstruction formula is valid (for example, due to \cite[Proposition 2.3]{Asadi2016}, when each $\A_\l$ is $\s$-unital).
Then $\A$ as a $\dot{\A}$-module has a normalized standard frame.
\end{teo}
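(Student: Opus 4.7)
The plan is to build the frame in $\A$ by taking the ``disjoint union'' of the given frames in each $\A_\l$, viewing each frame element as an element of $\A$ supported in a single coordinate. Fix normalized standard frames $\{x_{\l,j}\}_{j\in J_\l}$ in $\A_\l$ for which the reconstruction formula is valid, and define $X_{\l,j}\in\A$ to have $\l$-th coordinate equal to $x_{\l,j}$ and all other coordinates zero. I would propose that $\{X_{\l,j}\}_{\l\in\L,\, j\in J_\l}$ is the desired normalized standard frame in $\A$ as a $\dot{\A}$-module.

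The first computation is that $\<X_{\l,j},x\>$ has $\l$-coordinate $x_{\l,j}^*x_\l$ and vanishes in all other coordinates, so each term $\<x,X_{\l,j}\>\<X_{\l,j},x\>$ lives entirely in the $\l$-component. Hence a finite partial sum $S_F$ decomposes coordinatewise: in the $\l$-th coordinate it equals $\sum_{j:(\l,j)\in F}\<x_\l,x_{\l,j}\>\<x_{\l,j},x_\l\>$, which by the upper frame bound of $\A_\l$ is a positive element majorized by $x_\l^*x_\l$. In particular $0\le S_F\le\<x,x\>$ in $\A$, which immediately gives the upper frame bound $c_2=1$.

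For norm convergence to $\<x,x\>$ I would exploit the $c_0$-structure: only countably many $x_{\l_m}$ are nonzero and $\|x_{\l_m}\|\to0$. Given $\e>0$, pick $M$ with $\|x_{\l_m}\|^2<\e$ for all $m>M$; for each $m\le M$ use norm convergence of the $\A_{\l_m}$-reconstruction sum to choose a finite $F_m\subseteq J_{\l_m}$ with $\|x_{\l_m}^*x_{\l_m}-\sum_{j\in F_m}\<x_{\l_m},x_{\l_m,j}\>\<x_{\l_m,j},x_{\l_m}\>\|_{\A_{\l_m}}<\e$. Positivity of the summands together with the coordinatewise upper bound guarantees that this estimate is preserved under enlargement of $F_m$. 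Taking $F_0=\bigcup_{m\le M}\{\l_m\}\times F_m$, then for any finite $F\supseteq F_0$ every coordinate of $S_F-\<x,x\>$ has norm $<\e$: coordinates $\l=\l_m$ with $m\le M$ by the choice of $F_m$, coordinates $\l=\l_m$ with $m>M$ because the $\A_{\l_m}$-difference is majorized by $x_{\l_m}^*x_{\l_m}$ of norm $<\e$, and the remaining coordinates contribute zero. Since the norm on the $c_0$-sum is the supremum of the coordinate norms, $\|S_F-\<x,x\>\|\le\e$, proving both norm convergence and the lower bound $c_1=1$.

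The main difficulty I expect is the bookkeeping across the potentially uncountable index set $\bigsqcup_\l J_\l$: one must argue unconditional norm convergence along the net of finite subsets rather than along a single sequence, and extract an effective countable support for each $x$. The resolution is precisely the interplay between the $c_0$-hypothesis (only countably many nonzero $x_\l$, with norms tending to zero) and the positivity of the summands, which lets the ``tail over $\l$'' be absorbed into a single $\e$-estimate uniformly instead of one coordinate at a time. Once this is in place, the resulting system satisfies the reconstruction formula, and the remark preceding the theorem identifies it as a normalized standard frame in $\A$ over $\dot{\A}$.
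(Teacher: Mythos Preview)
Your proposal is correct and follows essentially the same approach as the paper: both build the frame as the disjoint union of the coordinate frames extended by zero, and use the $c_0$-tail estimate to obtain norm convergence. The only difference is presentational---the paper proves the reconstruction formula $x=\sum X_{\l,j}\<X_{\l,j},x\>$ by approximating $x$ with a finitely supported element, whereas you prove the frame identity $\sum\<x,X_{\l,j}\>\<X_{\l,j},x\>=\<x,x\>$ directly via positivity and the sup-norm (note that your final sentence slightly mislabels this as the reconstruction formula, but what you actually establish is already the normalized standard frame condition, so no appeal to the preceding remark is needed).
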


\begin{proof}
Denote by $\{x_j\}_{j\in J_\l}$ the frame in $\A_\l$.
For each $\l\in\L$, the elements of the frame can be considered as elements of the entire algebra $\A$ by extending them by zero
outside the corresponding index. Let us show that $\{x_j\}_{j\in \bigcup\limits_{\l\in\L}J_\l}$ is a frame in $\A$.

Let $x\in\A$. Fix arbitrary $\e>0$. There exists an element $x^m\in\A$ which is non-zero only in a finite number of indices $\l_1,\dots,\l_m$ such that $||x-x^m||<\e$. For each $\l_k$, $k=1,\dots,m$, there exists a finite set $\{x_j\}_{j\in J_{\l_k}'}$ of elements of the frame such that
$$||x^m_{\l_k}-\sum\limits_{j\in J_{\l_k}'}x_j\<x_j,x^m_{\l_k}\>||<\e,$$ hence
$$||x^m-\sum\limits_{j\in \bigcup\limits_{k=1}^mJ_{\l_k}'}x_j\<x_j,x^m\>||<\e$$ and
$$||x-\sum\limits_{j\in \bigcup\limits_{k=1}^mJ_{\l_k}'}x_j\<x_j,x^m\>||<2\e.$$ 
So we have
$x=\sum\limits_{j\in J}x_j\<x_j,x\>$, 
and the series converges in norm in $\A$. Hence 
$\<x,x\>=\sum\limits_{j\in J}\<x,x_j\>\<x_j,x\>$.

\end{proof}

\begin{cor}
Since $\A$ has a frame, it can be represented as an orthogonal direct summmand in the standard module
$\bigoplus\limits_{\l\in\Lambda}\dot{\A}$
of some cardinality, and so, by \cite[Theorem 3.5]{TroitFuf2020}, it satisfies the $\A$-compactness criterion.

Moreover, every $\A$-module which can be represented as an orthogonal direct summmand in the standard module $\bigoplus\limits_{\l\in\Lambda'}{\A}$ also can be represented
in the standard module
$\bigoplus\limits_{\l\in\Lambda}\dot{\A}$
of some cardinality, and hence it satisfies the $\A$-compactness criterion too.
\end{cor}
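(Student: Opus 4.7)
The plan is to obtain the Corollary as a direct application of Theorem \ref{iii_stfr} together with the structural equivalence of Frank--Larson and \cite[Theorem 3.5]{TroitFuf2020}, both of which are already recorded in the preliminaries. The work is conceptual rather than computational, so the proof should mostly be a matter of chaining these implications correctly.

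For the first assertion, I would start by invoking Theorem \ref{iii_stfr}: the algebra $\A$, regarded as a Hilbert module over itself (and therefore over $\dot{\A}$), admits a normalized standard frame. By the Frank--Larson characterization cited just before the corollary (\cite[3.5, 4.1 and 5.3]{FrankLarson2002}, see also \cite[Theorem 1.1]{HLi2010}), the existence of a standard frame in $\A$ is equivalent to the existence of a $\dot\A$-module ${\mathcal K}$ and a cardinal $\L$ such that $\A\oplus {\mathcal K}\cong \bigoplus_{\l\in\L}\dot{\A}$. Plugging this decomposition into \cite[Theorem 3.5]{TroitFuf2020} (restated earlier in the excerpt), one concludes that every adjointable operator on $\A$ whose unit ball image is totally bounded with respect to the uniform structure is $\A$-compact, i.e. the $\A$-compactness criterion holds.

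For the second assertion, suppose $\cN$ is an $\A$-module with $\cN \oplus {\mathcal L} \cong \bigoplus_{\l\in\L'}\A$ for some ${\mathcal L}$ and some cardinality $\L'$. Using the decomposition $\A\oplus {\mathcal K}\cong \bigoplus_{\mu\in\L}\dot{\A}$ obtained above, I would form the orthogonal sum over $\L'$ copies to get
\[
\Bigl(\bigoplus_{\l\in\L'}\A\Bigr)\oplus\Bigl(\bigoplus_{\l\in\L'}{\mathcal K}\Bigr)\cong \bigoplus_{(\l,\mu)\in\L'\times\L}\dot{\A}.
\]
Combining this with the assumed decomposition of $\cN$ yields
\[
\cN\oplus\Bigl({\mathcal L}\oplus\bigoplus_{\l\in\L'}{\mathcal K}\Bigr)\cong\bigoplus_{(\l,\mu)\in\L'\times\L}\dot{\A},
\]
so $\cN$ is indeed an orthogonal direct summand in a standard $\dot{\A}$-module of cardinality $|\L'\times\L|$. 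Another application of \cite[Theorem 3.5]{TroitFuf2020} then gives the $\A$-compactness criterion for $\cN$.

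The only step that requires a small amount of care is the passage from $\bigoplus_{\l\in\L'}\A$ to a standard $\dot\A$-module, because one must ensure that taking the Hilbert sum of copies of the decomposition $\A\oplus{\mathcal K}$ does produce a standard module over $\dot\A$ rather than merely an algebraic direct sum; this is immediate since $\bigoplus$ denotes the Hilbert sum and reassociating an orthogonal sum of orthogonal sums preserves the $\dot\A$-valued inner product. Apart from this bookkeeping, the corollary is essentially a formal rearrangement, and one could equivalently deduce the second part straight from the remark preceding the corollary, which already asserts that a direct summand in $\bigoplus_{\l\in\L'}\A$ inherits a standard frame whenever $\A$ does.
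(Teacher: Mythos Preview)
Your proposal is correct and aligns with the paper's approach: the paper gives no separate proof of this corollary, since the justification is embedded in the statement itself via the Frank--Larson equivalence and \cite[Theorem 3.5]{TroitFuf2020}, with the second assertion resting on the preceding remark that a direct summand in $\bigoplus_{\l\in\Lambda'}\A$ inherits a standard frame whenever $\A$ does. Your explicit decomposition for the second part is a perfectly valid (and slightly more detailed) way to unpack that remark, and you correctly note at the end that the remark already suffices.
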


\begin{rk}\label{iii_dop}
If $\A$ also is a commutative algebra, then $\A\cong c_0-\sum\limits_{\l\in\L}C_0(K_\l)\cong C_0(\bigsqcup\limits_{\l\in \L}K_\l)$, and $C_0(K_\l)$ is $\s$-unital if and only if $K_\l$ is $\s$-compact; if $\L$ is uncountable, then $K=\bigsqcup\limits_{\l\in\L}K_\l$ is not $\s$-compact, because a compact set in $K$ intersects only a finite number of $K_\l$. If $\A_\l$ is a unital algebra, then as a frame in $\A_\l$ we can take just one element, the identity of the algebra (it corresponds to a function which identically equals to one in the commutative case).
\end{rk}

 \section{The properties of $\KII$}

Let us introduce several examples of spaces from the class $\KII$.

\begin{ex}\label{rat_seq}
$K$ --- the set of real numbers with rational sequence topology (\cite[\S 65]{Exampl}). Moreover, it is separable.
\end{ex}

\begin{ex}\label{beN}
$K=\be\N\setminus\{t'\}$, where $t'\in\be\N\setminus\N$ --- the Stone-\v{C}ech compactification of natural numbers without an arbitrary
point from the 
growth.
It is not $\s$-compact since \cite[9.6]{GilJer}, and obviously it is separable.
More generally, we can take instead of $\N$ any separable non-compact space (or just $\s$-compact, but we can lost separability).
\end{ex}

\begin{ex} 
$K=\a{P}\times [0,\om_0]\setminus\{(p_\infty,\om_0)\}$, where $P$ is a locally compact, non-$\s$-compact Hausdorff space, $\a{P}=P\cup\{p_\infty\}$ --- its one-point compactification.
\end{ex}

\begin{teo}\label{ii_nostfr}
Let $K\in\KII$. Then there is no standard frame in the $\dot{C_0}(K)$-module $C_0(K)$.
\end{teo}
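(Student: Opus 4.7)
The plan is to argue by contradiction, reducing an arbitrary standard frame to a countable family and then applying the characterization of frames via states together with Lemma \ref{frp1} to produce a contradiction. Suppose $\{x_j\}_{j\in J}$ is a standard frame in the $\dot{C_0}(K)$-module $C_0(K)$ with bounds $c_1,c_2>0$, and fix a dense $\s$-compact subset $A=\bigcup_{n\in\N}K_n$ guaranteed by $K\in\KII$.

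The first main step is to show that at most countably many $x_j$ are nonzero. Using Lemma \ref{extrest}, choose for each $n$ a function $g_n\in C_0(K)$ with $g_n\equiv 1$ on $K_n$ and $|g_n|\le 1$. By the frame definition, the series $\sum_{j\in J}\<g_n,x_j\>\<x_j,g_n\>$ converges in norm in $\dot{C_0}(K)$. Since this is an unordered norm-convergent sum of positive elements of a $C^*$-algebra, its support $J_n:=\{j:\<g_n,x_j\>\ne 0\}$ must be countable (the sets $\{j:\|\<g_n,x_j\>\<x_j,g_n\>\|\ge 1/m\}$ are finite for each $m$). Because $g_n\equiv 1$ on $K_n$, any $x_j$ not vanishing identically on $K_n$ lies in $J_n$. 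Taking $J':=\bigcup_n J_n$ gives a countable set outside of which every $x_j$ vanishes on $A$; by continuity and density of $A$ in $K$, such $x_j$ vanish on all of $K$. Hence, discarding the zero elements, the frame may be assumed to be a countable family $\{x_{j_k}\}_{k\in\N}\subset C_0(K)$.

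The second main step is to exploit non-$\s$-compactness of $K$. By Lemma \ref{frp1} applied to the countable family $\{x_{j_k}\}$, there exists a point $t_0\in K$ at which every $x_{j_k}$ vanishes; hence all frame elements vanish at $t_0$. Pick any $x\in C_0(K)$ with $x(t_0)=1$ (which exists by Lemma \ref{extrest} applied to the compact set $\{t_0\}$), and let $\f=\delta_{t_0}$ be the evaluation character on $\dot{C_0}(K)=C(\a K)$, which is a state. Then $\f(\<x,x\>)=1$ while
\[
\sum_{k}\f\bigl(\<x,x_{j_k}\>\<x_{j_k},x\>\bigr)=\sum_{k}|x(t_0)\overline{x_{j_k}(t_0)}|^2=0,
\]
contradicting the left inequality of Lemma \ref{fr_cr}, $c_1\f(\<x,x\>)\le \sum_k\f(\<x,x_{j_k}\>\<x_{j_k},x\>)$, since $c_1>0$.

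The principal technical point requiring care is the passage from norm-convergence of an unordered sum to countability of support; everything else is a routine assembly of the cited lemmas. I expect no further obstacle, and the argument simultaneously illustrates how density of a $\s$-compact subset, rather than $\s$-compactness itself, already prevents the existence of standard frames.
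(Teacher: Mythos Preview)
Your proof is correct and follows essentially the same approach as the paper: both arguments use the dense $\sigma$-compact subset together with norm-convergence of the frame series to show that at most countably many frame elements are nonzero, and then invoke the fact (Lemma~\ref{frp1}, or equivalently the remark after Lemma~\ref{fr_cr} that frames separate points) that a countable family in $C_0(K)$ must have a common zero. The only difference is organizational --- the paper states the separation obstruction up front and derives the countability afterwards, whereas you reverse the order and spell out the final contradiction via an explicit point evaluation.
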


\begin{proof}
Assume that there exists a frame $\{x_j\}_{j\in J}$ of elements from $C_0(K)$. It contains an uncountable number of nonzero elements, since countable cannot separate the points of $K$.

Let $\{K_n\}_{n\in\N}$ be a sequence of compact sets in $K$ such that $\overline{\bigcup\limits_{n\in\N}K_n}=K$.

For every $n\in\N$ there exists a function $g_n\in C_0(K)$ such that $g_n(t)=1$ on $K_n$, $|g_n(t)|\le1$ on $K$.
There is a non-empty at most countable set $\{x_j\}_{j\in J_n}$ of elements of the frame such that $x_j(t)\ne0$ identically on $K_n$ because the series $\sum\limits_j \<g_n,x_j\>\<x_j,g_n\>(t)=\sum\limits_j |x_j(t)|^2$ converges uniformly on $K_n$. That is, if $j\in J\setminus J_n$, then $x_j(t)=0$ on $K_n$.

Hence, there is at most countable set $\{x_j\}_{j\in \bigcup\limits_{n\in\N}J_n}$ such that $x_j(t)\ne0$ identically on $\bigcup\limits_{n\in\N}K_n$. Hence, for every $j\in J\setminus \bigcup\limits_{n\in\N}J_n$ we have $x_j(t)=0$ for $t\in\bigcup\limits_{n\in\N} K_n$, but due to the fact that $\overline{\bigcup\limits_{n\in\N}K_n}=K$ it also holds for $t\in K$. Hence, only a countable set of frame elements is not identically zero.
A contradiction.
\end{proof}

\begin{cor}
From this result it follows that $C_0(K)$ cannot be represented as an orthogonal direct summand of a standard module $\bigoplus\limits_{\l\in\Lambda}\dot{C_0}(K)$.
\end{cor}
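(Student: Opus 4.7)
The plan is to argue by contradiction: assume a standard frame $\{x_j\}_{j\in J}$ exists in $C_0(K)$ (viewed as a $\dot{C_0}(K)$-module) and derive that only countably many of its elements are nonzero, which will clash with the fact that a frame must separate points of $K$.

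First I would set $J^*:=\{j\in J : x_j\not\equiv 0\}$ and show that $J^*$ is uncountable. Indeed, as was observed right after Lemma \ref{fr_cr}, every frame in $C_0(K)$ must separate the points of $K$; but $K$ is non-$\s$-compact (being in $\KII$), so by Lemma \ref{frp1} no countable family of elements of $C_0(K)$ can separate points. Therefore $J^*$ is uncountable.

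Next I would exploit the $\KII$-assumption by choosing a sequence $\{K_n\}_{n\in\N}$ of compact subsets with $\overline{\bigcup_n K_n}=K$, and, using Lemma \ref{extrest}, functions $g_n\in C_0(K)$ with $g_n\equiv 1$ on $K_n$ and $|g_n|\le 1$. The frame axiom gives that the series
\[
\sum_{j\in J}\langle g_n,x_j\rangle\langle x_j,g_n\rangle = \sum_{j\in J}|g_n|^2|x_j|^2
\]
converges in norm in $C_0(K)$. The key observation is that any norm-convergent series of nonnegative elements in a Banach space has at most countably many nonzero terms (since only countably many of its partial-sum increments can have norm exceeding $1/m$, for each $m$). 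Hence the set
\[
J_n:=\{j\in J : x_j|_{K_n}\not\equiv 0\}
\]
is countable, because on $K_n$ the $j$-th term equals $|x_j|^2$.

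Finally I would pass to the union: $J'':=\bigcup_{n\in\N}J_n$ is countable, and any $j\notin J''$ satisfies $x_j\equiv 0$ on $\bigcup_n K_n$; by continuity of $x_j$ and density of $\bigcup_n K_n$ in $K$, this forces $x_j\equiv 0$ on $K$, so $J^*\subseteq J''$ is countable, contradicting the conclusion of the first step. The main obstacle is really just the bookkeeping step of identifying $J_n$ as countable from norm convergence of the frame series; conceptually this is the pressure point where the density of $\bigcup K_n$ combines with the continuity of the $x_j$ to transfer countability from each compact piece to all of $K$.
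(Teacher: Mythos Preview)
Your argument is a correct and essentially verbatim reproduction of the paper's proof of Theorem~\ref{ii_nostfr} (the result immediately preceding this corollary): the same choice of compacts $K_n$ with dense union, the same functions $g_n$, the same observation that norm convergence of $\sum_j |x_j|^2$ on $K_n$ forces only countably many $x_j$ to be nonzero there, and the same density argument. As a proof of that theorem, nothing is missing.

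However, you are proving the wrong statement. The corollary is not about the nonexistence of a standard frame --- that is precisely what Theorem~\ref{ii_nostfr} already asserts --- but about the impossibility of representing $C_0(K)$ as an orthogonal direct summand of $\bigoplus_{\lambda\in\Lambda}\dot{C_0}(K)$. The paper does not give a proof of the corollary because it is a one-line deduction: as recalled in \S1 (the Frank--Larson results \cite[3.5, 4.1, 5.3]{FrankLarson2002}), a Hilbert $C^*$-module embeds as an orthogonal direct summand of some $\bigoplus_{\lambda}\dot{\mathcal A}$ if and only if it admits a standard frame; combined with Theorem~\ref{ii_nostfr} this gives the corollary immediately. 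Your write-up never mentions direct summands and never invokes this equivalence, so as a proof of the corollary it has a genuine gap: you would need to open by assuming such a direct-summand representation exists, cite the Frank--Larson equivalence to obtain a standard frame, and only then run (or simply cite) the argument you wrote.
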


Let now $K$ be 
moreover separable, that is, finite sets can be taken as compacts in the definition of $\KII$.
Spaces from examples \ref{rat_seq} or \ref{beN} can be taken as such spaces.
Let us show that in this case 
non-standard frames also don't exist.

\begin{teo}\label{ii_nofr}
Let $K\in\KII$ and $K$ is separable.
Then there is no frames in the $\dot{C_0}(K)$-module $C_0(K)$.
\end{teo}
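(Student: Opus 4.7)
The plan is to upgrade the argument of Theorem~\ref{ii_nostfr}: there we exploited uniform convergence of the Bessel series on compacta (which needed the frame to be standard), whereas now we only have ultraweak convergence. To circumvent this, I would use Lemma~\ref{fr_cr} applied to the point-evaluation states, which for the commutative algebra $C_0(K)$ reduces the frame inequality to pointwise control.

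Suppose for contradiction that a frame $\{x_j\}_{j\in J}\subset C_0(K)$ exists with bounds $c_1,c_2>0$. For each $t\in K$, the point evaluation $\delta_t$ is a state on $C_0(K)$. By Lemma~\ref{extrest} pick $x\in C_0(K)$ with $x(t)\ne 0$; applying Lemma~\ref{fr_cr} with $\f=\delta_t$ gives
$$
c_1|x(t)|^2\le \sum_{j\in J}|x(t)|^2|x_j(t)|^2\le c_2|x(t)|^2,
$$
so $\sum_{j\in J}|x_j(t)|^2\le c_2$. Being a sum of nonnegative numbers with finite total, the set $J_t:=\{j\in J:x_j(t)\ne 0\}$ is at most countable. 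Now separability of $K$ enters: fix a countable dense set $D=\{t_n\}_{n\in\N}\subset K$ and put $J_0:=\bigcup_{n\in\N}J_{t_n}$, which is countable. For every $j\in J\setminus J_0$ the function $x_j$ vanishes on $D$, and by continuity $x_j\equiv 0$ on $\overline{D}=K$. Hence only countably many frame elements are nonzero; relabel them $\{x_n\}_{n\in\N}$.

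Now apply Lemma~\ref{frp1} (which requires only that $K$ be locally compact and not $\s$-compact): there exists $t_0\in K$ with $x_n(t_0)=0$ for every $n$. Choosing $x\in C_0(K)$ with $x(t_0)\ne 0$ and inserting $\delta_{t_0}$ into the lower frame inequality yields
$$
0<c_1|x(t_0)|^2\le \sum_{n\in\N}|x(t_0)|^2|x_n(t_0)|^2=0,
$$
a contradiction.

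The main obstacle is the first step: turning the non-standard (ultraweak) convergence into a quantitative pointwise bound. Lemma~\ref{fr_cr} accomplishes this after applying a state, and commutativity of $C_0(K)$ together with the existence of the pure states $\delta_t$ makes the bound genuinely pointwise. Once that is in hand, separability and the non-$\s$-compactness (via Lemma~\ref{frp1}) combine routinely to eliminate the uncountable index set and produce a point of common vanishing, contradicting the required lower frame bound.
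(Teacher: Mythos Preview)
Your argument is correct and follows essentially the same route as the paper: both reduce the non-standard frame inequality to a pointwise bound via Lemma~\ref{fr_cr} applied to the point-evaluation states $\delta_{t_n}$, use separability to conclude that only countably many $x_j$ are not identically zero, and then invoke Lemma~\ref{frp1}. The only cosmetic difference is the endgame: the paper notes at the outset that a frame must separate points of $K$ (hence must have uncountably many nonzero elements) and obtains the contradiction from the count, whereas you produce a common zero $t_0$ explicitly and violate the lower frame inequality there; these are the same content, and your version is arguably more self-contained.
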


\begin{proof}

Assume that there exists a frame $\{x_j\}_{j\in J}$ of elements from $C_0(K)$. It must contain an uncountable number of nonzero elements, since countable cannot separate the points of $K$.

Let $\{t_n\}_{n\in\N}$ be a countable dense subset of $K$.

For each $n\in\N$ there exists a function $g_n\in C_0(K)$ such that $g_n(t_n)=1$, $|g_n(t)|\le1$ on $K$.
There is a non-empty at most countable set $\{x_j\}_{j\in J_n}$ of elements of the frame such that $x_j(t_n)\ne0$,
because
taking $x=g_n$ and $\f$ --- evaluation at the point $t_n$, due to lemma \ref{fr_cr} we have that
the series $\sum\limits_j \<g_n,x_j\>\<x_j,g_n\>(t)=\sum\limits_j |x_j(t)|^2$ converges at the point $t_n$. That is, if $j\in J\setminus J_n$, then $x_j(t_n)=0$.

Hence, there is at most countable set $\{x_j\}_{j\in \bigcup\limits_{n\in\N}J_n}$ such that $x_j(t)\ne0$ identically on $\bigcup\limits_{n\in\N}\{t_n\}$. Hence, for every $j\in J\setminus \bigcup\limits_{n\in\N}J_n$ we have that $x_j(t)=0$ for $t\in\bigcup\limits_{n\in\N} \{t_n\}$, but due to the fact that $\overline{\bigcup\limits_{n\in\N}\{t_n\}}=K$ it also holds for $t\in K$. Hence, only a countable set of frame elements is identically zero.
A contradiction.
\end{proof}

Despite the previous two theorems, it is possible to construct an example of a space $K\in\KII$ such that a non-standard frame exists in $C_0(K)$.

\begin{ex}\label{ii_exfr}
Let $P$ be a non-$\s$-compact space such that $C_0(P)$ has a normalized frame $\{u_\be\}_{\be\in B}$. 
We know that such a space exists (see remark \ref{iii_dop}).

Take $K=\a{P}\times [0,\om_0]\setminus\{(p_\infty,\om_0)\}$ and define $y_\be\in C_0(K)$, $\be\in B$, by the formula $y_\be(p,n)=u_\be(p)$, where $(p,n)=t\in K$, i.e. $\{y_\be\}_{\be\in B}$ is a ``copying'' of functions $\{u_\be\}_{\be\in B}$ on each ``row'' ${ P}\times\{n\}$, $n\in[0,\om_0]$. Also consider $\{w_n\}_{n\in[0,\om_0)}$ such that $w_n=1$ on $\a{P}\times\{n\}$ and $w_n$ vanishes outside $\a{P}\times\{n\}$
(obviously, $w_n\in C_0(K)$ for any $n\in[0,\om_0)$, since every $\a{P}\times\{n\}$ is a clopen set).
Define $\{x_j\}_{j\in J}=\{y_\be\}_{\be\in B}\cup\{w_n\}_{n\in[0,\om_0)}$ .
\end{ex}

\begin{teo}\label{ii_fr}
In example \ref{ii_exfr} the system $\{x_j\}_{j\in J}$ is a (non-standard) frame in $C_0(K)$.
\end{teo}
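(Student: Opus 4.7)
The plan is to apply the criterion of Lemma \ref{fr_cr}. Since $C_0(K)$ is commutative, $\<x, x_j\>\<x_j, x\> = |x|^2|x_j|^2$ and every state $\f$ on $\dot{C_0}(K) \cong C(\a K)$ is integration against a Radon probability measure $\mu$ on $\a K$, so the frame inequality rewrites as
$$c_1 \int_{\a K} |x|^2\, d\mu \;\le\; \sum_{j \in J} \int_{\a K} |x|^2 |x_j|^2\, d\mu \;\le\; c_2 \int_{\a K} |x|^2\, d\mu.$$
I will show that $c_1 = 1$, $c_2 = 2$ work; the main ingredient is the pointwise estimate $1 \le S(t) \le 2$ on $K$ for the non-negative function $S(t) := \sum_{j \in J} |x_j(t)|^2$, interpreted as the supremum over finite subsets of $J$.

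The pointwise bound uses the identity $\sum_\be |u_\be(p)|^2 = 1$ for every $p \in P$, which comes from Lemma \ref{fr_cr} applied to the normalized frame $\{u_\be\}$ at point evaluations, together with $u_\be(p_\infty) = 0$ (since $u_\be \in C_0(P)$). A case split then yields: at $t = (p, n)$ with $p \in P$ and $n < \om_0$, the $y_\be$-part of $S$ equals $1$ and exactly one $w_{n'}$ is non-zero at $t$ (namely $w_n$, with $|w_n(t)| = 1$), so $S(t) = 2$; at $t = (p_\infty, n)$ with $n < \om_0$, the $y_\be$-part vanishes while $|w_n(t)|^2 = 1$, so $S(t) = 1$; at $t = (p, \om_0)$ with $p \in P$, the $y_\be$-part equals $1$ and every $w_{n'}$ vanishes, so $S(t) = 1$ (and $(p_\infty, \om_0) \notin K$).

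To derive the integrated inequality I would split the series into its countable $w_n$-piece, where ordinary monotone convergence gives $\sum_n \int |x|^2 |w_n|^2\, d\mu = \int_K |x|^2 \mathbf{1}_{\a P \times [0,\om_0)}\, d\mu$, and its uncountable $y_\be$-piece. Pushing $|x|^2 \mu$ forward along the projection $\pi\colon \a K \to \a P$ to a finite Radon measure $\tau$ on $\a P$, the $y_\be$-piece equals $\sum_\be \int_{\a P} |u_\be|^2\, d\tau$; this is bounded above by $\tau(P)$ from $\sum_{\be \in F} |u_\be|^2 \le 1$ on $P$ for every finite $F$, and below by $\tau(P)$ via inner regularity (for any $\e > 0$ take a compact $L \subset P$ with $\tau(L) \ge \tau(P) - \e$, pick $\tilde x \in C_0(P)$ with $\tilde x \equiv 1$ on $L$ by Lemma \ref{extrest}, and apply the frame identity for $\{u_\be\}$ to $\tilde x$). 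Adding the pieces yields $\sum_j \f(\<x, x_j\>\<x_j, x\>) = \int_K |x|^2 S\, d\mu$, and the pointwise bounds deliver the frame inequalities with $c_1 = 1$, $c_2 = 2$; the upper Bessel bound together with Remark \ref{rk_bes} supplies the ultrastrong, hence ultraweak, convergence required by Definition \ref{dfn:fr}. The principal technical point is precisely this uncountable $y_\be$-sum --- naive interchange with integration fails for uncountable nets, which is what forces the detour through Radon regularity and the frame identity on $C_0(P)$; by contrast, checking $y_\be, w_n \in C_0(K)$ (both extend continuously to $\a P \times [0, \om_0]$ and vanish at $(p_\infty, \om_0)$) is routine.
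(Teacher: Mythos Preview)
Your proof is correct and shares the same skeleton as the paper's: the upper bound comes from the pointwise inequality $\sum_{j\in F}|x_j(t)|^2\le 2$ for finite $F$ (which the paper states without the case analysis you spell out), and the lower bound is obtained by splitting into the $\{w_n\}$-piece, handled by monotone convergence, and the $\{y_\be\}$-piece, handled via the frame property of $\{u_\be\}$ in $C_0(P)$.

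The genuine difference is in how the $\{y_\be\}$-contribution to the lower bound is organized. The paper decomposes the \emph{measure} rather than the index set: it writes $\mu=\mu_1+\mu_2$ with $\mu_1$ supported on the closed slice $P\times\{\om_0\}$ and $\mu_2$ on $\a P\times[0,\om_0)$; on that slice the $y_\be$ literally restrict to $u_\be$, so the frame inequality for $\{u_\be\}$ applies directly to the state $\f_1$, with no pushforward and no appeal to inner regularity needed. Your route instead pushes $|x|^2\mu$ forward along the projection to $\a P$ and then has to argue separately (via inner regularity and an auxiliary $\tilde x$) that $\sum_\be\int|u_\be|^2\,d\tau=\tau(P)$. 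This is a legitimate detour, and it buys you something the paper does not obtain: you actually evaluate $\sum_j\f(\<x,x_j\>\<x_j,x\>)$ exactly as $\int_K|x|^2S\,d\mu$, so both frame constants drop out at once from $1\le S\le 2$. The paper's decomposition is more economical but handles the upper and lower estimates by separate arguments.
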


\begin{proof}
Take an arbitrary $x\in C_0(K)$. For any partial sum of the series $\sum\limits_j \<x,x_j\>\<x_j,x\>(t)=\sum\limits_j|x(t)|^2|x_j(t)|^2$ we have $\sum\limits_j|x(t)|^2|x_j(t)|^2\le2|x(t)|^2=2\<x,x\>(t)$ (actually, if $p=p_\infty$ or $n=\om_0$, where $t=(p,n)$, then two can be replaced by one), which means that the series converges in the ultrastrong topology.

Hence we obtain the right side of the inequality
of lemma \ref{fr_cr} with $c_2=2$ for any state $\f$ (in particular, the corresponding series converges).
Indeed, if for some state $\f$ we have $\sum\limits_j \f(\<x,x_j\>\<x_j,x\>)>
2\f(\<x,x\>)$,
then there exists some partial sum of the series for which it also holds that
$\sum\limits_{j\in J'} \f(\<x,x_j\>\<x_j,x\>)>
2\f(\<x,x\>)$. And then we have $\f(2\<x,x\>-\sum\limits_{j\in J'} \<x,x_j\>\<x_j,x\>)<0$, which contradicts the inequality
$2\<x,x\>\ge\sum\limits_{j\in J'} \<x,x_j\>\<x_j,x\>$ and positivity of $\f$.

Let us show that the left side holds too.

Let $\f$ be a state on $C_0(K)$, that is, there is a Radon measure $\mu$ on $K$ such that $\f(x)=\int\limits_{K}x( t)d\mu(t)$ (see \cite[436K]{Frem4}).
Then $\mu$ can be represented as a sum of the Radon measures $\mu_1$ which support is $P\times \{\om_0\}$ and $\mu_2$ which support is $\a{P}\times [0,\om_0) $.
Then for any $x\in C_0(K)$ we have
$\f(\<x,x\>)=\int\limits_{P\times \{\om_0\}}|x(t)|^2d\mu_1(t)+\int\limits_{\a{ P}\times [0,\om_0)}|x(t)|^2d\mu_2(t)$.
The representation of a measure as a sum corresponds to the representation of $\f$ as a sum of states $\f_1$
and $\f_2$.

Identify the restrictions of $y_\be(p,n)$ on $P\times \{\om_0\}$ with $u_\be(p)$, then $\{u_\be\}_{\be\in B }$ is a normalized frame in $C_0(P\times \{\om_0\})$.
For
the restriction of $x$ to $P\times \{\om_0\}$ (and hence for $x$ itself, since $\mu_1$ vanishes outside this subset) we have
$\f_1(\<x,x\>)\le
\sum\limits_{\be\in B}\f_1(\<x,u_\be\>\<u_\be,x\>)
=\sum\limits_{j}\f_1(\<x,x_j\>\<x_j,x\>)
$
($\f_1$ can be considered as a state on $C_0(P\times \{\om_0\})$).

$P\times \{\om_0\}$ is a closed set in $K$, so $\a{P}\times [0,\om_0)$ is open and hence it is Borel set. Therefore $\mu_2$ is a Radon measure on $\a{P}\times [0,\om_0)$ (\cite[416R(b)]{Frem4}).
By the monotone convergence theorem (see, for example, \cite[Theorem 2.25]{Weaver} or \cite[Proposition 8.7(b)]{Yeh})
we have
$$
\int\limits_{\a{P}\times [0,\om_0)}|x(t)|^2d\mu_2(t)=
\int\limits_{\a{P}\times [0,\om_0)}\sum\limits_{n=1}^\infty|x(t)|^2|w_n(t)|^2d\mu_2(t)=
\sum\limits_{n=1}^\infty\int\limits_{\a{P}\times [0,\om_0)}|x(t)|^2|w_n(t)|^2d\mu_2(t),
$$
hence

\begin{multline*}
\f_2(\<x,x\>)=\int\limits_{\a{P}\times [0,\om_0)}|x(t)|^2d\mu_2(t)=
\sum\limits_{n=1}^\infty\int\limits_{\a{P}\times [0,\om_0)}|x(t)|^2|w_n(t)|^2d\mu_2(t)=\\
=\sum\limits_{n=1}^\infty\f_2(\<x,w_n\>\<w_n,x\>)\le
\sum\limits_{j}\f_2(\<x,x_j\>\<x_j,x\>).
\end{multline*}

By summing up the obtained inequalities, we have
$
\f(\<x,x\>)\le
\sum\limits_{j}\f(\<x,x_j\>\<x_j,x\>)
$. 

Thus, for every state $\f$ on $C_0(K)$ and every $x\in C_0(K)$
the inequalities $\f(\<x,x\>)\le\sum\limits_j \f(\<x,x_j\>\<x_j,x\>)\le2\f(\<x,x\> )$ hold, so $\{x_j\}_{j\in J}$ is a frame in $C_0(K)$ with constants 1 and 2.
\end{proof}

\begin{rk}
It is clear that the proof of the previous theorem is still valid if instead of normalized frame in $C_0(P)$ we take a frame with arbitrary frame constants.
\end{rk}

The existence of a standard frame is a sufficient but not necessary condition for the $\A$-compactness criterion to be satisfied.
Therefore we cannot assert that for any topological space from the
class $\KII$
the criterion is not valid, 
but we can construct an example of a space (more precisely, some subclass of spaces) for which the criterion actually fails.

First, let us consider several properties of topological spaces (it is assumed everywhere that $K$ is a locally compact Hausdorff space).

1) $\be K=\a K$ (in other words, every continuous bounded function has a limit at infinity).

2) Any $\s$-compact subset of $K$ is precompact in $K$ (that is, $K\in\KIV$).

2.1) Any continuous function which tends to zero at infinity is constant outside some compact $K'$.

2.2) Any continuous function that has a limit at infinity is constant outside some compact $K'$.

3) Any continuous function on $K$ is constant outside some compact $K'$.

Let's observe the relationships between these properties.

\begin{lem}
The properties 2), 2.1), 2.2) are equivalent.
\end{lem}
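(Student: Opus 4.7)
The plan is to close the cycle 2.2) $\Rightarrow$ 2.1) $\Rightarrow$ 2) $\Rightarrow$ 2.2), since two of the three implications are one-line observations and essentially all the work sits in 2.1) $\Rightarrow$ 2). For 2.2) $\Rightarrow$ 2.1) I would just note that every $f \in C_0(K)$ has limit $0$ at infinity, so 2.2) applied to $f$ forces it to be constantly $0$ outside some compact. For 2) $\Rightarrow$ 2.2), given $f \in C_b(K)$ with limit $c$ at infinity, the shifted function $f - c$ lies in $C_0(K)$, so the level sets $\{t \in K : |f(t) - c| \ge 1/n\}$ are compact; their union $\{t \in K : f(t) \neq c\}$ is $\s$-compact, and 2) then gives that its closure is compact, outside which $f \equiv c$.

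The core step is 2.1) $\Rightarrow$ 2). Given a $\s$-compact subset $S = \bigcup_{n \in \N} K_n$ of $K$, I would use Lemma~\ref{extrest} to choose non-negative functions $f_n \in C_0(K)$ with $f_n \equiv 1$ on $K_n$ and $|f_n| \le 1$ on $K$. The series $f := \sum_{n \in \N} 2^{-n} f_n$ converges uniformly and hence lies in $C_0(K)$ (a closed subspace of $C_b(K)$), and moreover $f(t) \ge 2^{-n} > 0$ whenever $t \in K_n$, so $f > 0$ on $S$. Applying 2.1) to this $f$ produces a compact $K' \subset K$ on the complement of which $f$ is constantly $0$; consequently $S \subset K'$, and $\overline{S}$ is a closed subset of $K'$, hence compact.

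The only genuine obstacle is constructing the witnessing function $f$ with the required positivity on $S$; this is the same ``sum of bump functions'' trick already used in the excerpt to equate $\s$-unitality of $C_0(K)$ with $\s$-compactness of $K$, and once that is in place the rest of the argument is mechanical.
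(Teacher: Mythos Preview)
Your proof is correct, and the main step 2.1) $\Rightarrow$ 2) is identical to the paper's: the same weighted sum $\sum 2^{-n} f_n$ of bump functions, yielding a $C_0(K)$-function that is strictly positive on the given $\s$-compact set and hence forces that set into the compact $K'$ produced by 2.1).

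The organization differs slightly. The paper establishes 2.1) $\Leftrightarrow$ 2.2) directly (the non-trivial direction 2.1) $\Rightarrow$ 2.2) by subtracting off the limit $f_\infty$, exactly the ``shift'' you use elsewhere), proves 2.1) $\Rightarrow$ 2), and then \emph{cites} the implication 2) $\Rightarrow$ 2.1) from \cite[Lemma 1.5]{Fuf2021faa}. You instead close the cycle 2.2) $\Rightarrow$ 2.1) $\Rightarrow$ 2) $\Rightarrow$ 2.2), supplying your own argument for 2) $\Rightarrow$ 2.2) via the level sets $\{|f-c|\ge 1/n\}$. Your route is fully self-contained and avoids the external reference; the paper's route is marginally shorter in text by outsourcing one implication.
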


\begin{proof}
Indeed, 2.1) obviously follows from 2.2).

Let's prove 2.2) if 2.1) is true.
Let $f$ be a continuous function which has a limit at infinity equals to $f_\infty$. Then the continuous function $f-f_\infty$ tends to zero at infinity, so it vanishes outside some compact set, so the function $f$ is constant and equals to $f_\infty$ outside the same compact set.

Let's prove 2) if 2.1) is true.
Let $\{K_n\}_{n\in\N}$ be a sequence of compact sets in $K$. For every $n\in\N$ there exists a function $g_n\in C_0(K)$ such
that $g_n(t)=1$ on $K_n$, $|g_n(t)|\le1$ on $K$. Define the function $g\in C_0(K)$ by formula $g(t)=\sum\limits_{n\in\N}\frac{1}{2^n}g_n(t)$. It tends to zero at infinity, so it vanishes outside some compact $K'$, and moreover, it is nonzero at $\bigcup\limits_{n\in\N}K_n$, so $\bigcup\limits_ {n\in\N}K_n\subset K'$. Q.E.D.

Implication 2) $\Rightarrow$ 2.1) was proved in \cite[Lemma 1.5]{Fuf2021faa}.

\end{proof}

Obviously, 3) implies 2). The converse, in general, is false; it suffices to consider a disjoint union of sets with property 2), for example, $[0,\om_1)$. The same example shows that 2) does not imply 1).

From 3) it also obviously follows that 1) holds. The converse is not true, as will follow from the example we will construct. Also, this example will not satisfy 2). Also, this example will represent a class of spaces for which the criterion of $\A$-compactness fails.

\begin{ex}\label{ii_exnocr}
Take $K=\a{P}\times [0,\om_0]\setminus\{(p_\infty,\om_0)\}$, where $P\in\KIV$ (that is, $P$ satisfies the property 2)),
$\a{P}=P\cup\{p_\infty\}$ --- its one-point compactification.
\end{ex}

A special case of this construction is the deleted Tychonoff plank (\cite[\S 87]{Exampl}) if $P=[0,\om_1)$, $\a P=[0,\om_1]$.

This space does not satisfy 2) (and, as a consequence, does not satisfy 3)), since it contains a countable dense family of compact sets $\{\a P\times \{n\}\}_{n\in\N }$. Let us show that it satisfies 1)
(this generalizes the properties of the deleted Tychonoff plank, see \cite[8.20]{GilJer}).

\begin{teo}
Let $K$ be the space from the example \ref{ii_exnocr}. Then $\a K=\be K$, that is, every continuous bounded function has a limit at infinity. Moreover, $K$ is pseudo-compact, that is, every continuous function on $K$ is bounded.
\end{teo}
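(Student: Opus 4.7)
The plan is to exploit the explicit description of compact subsets of $K$: since $K$ is $\a P\times[0,\om_0]$ with the single point $(p_\infty,\om_0)$ removed, any compact $S\subset K$ must be disjoint from some basic neighbourhood of that corner in $\a P\times[0,\om_0]$, hence $S\subset(C\times[0,\om_0])\cup(\a P\times[0,N])$ for some compact $C\subset P$ and some $N<\om_0$. Proving $\a K=\be K$ thus amounts to showing that every $f\in C_b(K)$ lies, outside some such set, within $\e$ of a single value $L$, while pseudo-compactness amounts to ruling out unbounded $f\in C(K)$.

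For $\a K=\be K$ I first show $L:=\lim_{n\to\om_0}f(p_\infty,n)$ exists. If not, pick $n_k,m_k\to\om_0$ with $f(p_\infty,n_k)\to L_1$, $f(p_\infty,m_k)\to L_2$, $L_1\ne L_2$. For each $k$ the slice $f_{n_k}(p):=f(p,n_k)$ is continuous on the compact space $\a P$, so continuity at $p_\infty$ supplies a compact $C_{n_k}\subset P$ off which $|f_{n_k}(p)-f_{n_k}(p_\infty)|<\e$, and similarly for $C_{m_k}$. The $\s$-compact union $\bigcup_k(C_{n_k}\cup C_{m_k})$ is precompact in $P$ by the $\KIV$ hypothesis, so its closure $C$ is compact; choosing any $p_*\in P\setminus C$, continuity of $f$ on the compact column $\{p_*\}\times[0,\om_0]$ forces both $f(p_*,n_k)$ and $f(p_*,m_k)$ to converge to $f(p_*,\om_0)$, which combined with the two $\e$-closeness estimates gives $L_1=L_2$ as $\e\to0$. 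Given $\e>0$, fix $N<\om_0$ with $|f(p_\infty,n)-L|<\e$ for $n>N$; for each such $n$ pick a compact $C_n\subset P$ off which $|f_n(p)-f_n(p_\infty)|<\e$, and set $C:=\overline{\bigcup_{n>N}C_n}$, compact by $\KIV$. A triangle-inequality check then shows $|f(p,m)-L|\le2\e$ for every $(p,m)\in K$ lying outside $(C\times[0,\om_0])\cup(\a P\times[0,N])$ --- the boundary case $m=\om_0$ being handled by taking $n\to\om_0$ along the compact column at $p$ --- so $f$ extends continuously to $\a K$ with value $L$ at infinity.

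For pseudo-compactness, assume $f\in C(K)$ is unbounded and pick $(p_k,n_k)\in K$ with $|f(p_k,n_k)|\to\infty$. Boundedness of $f$ on each compact slab $\a P\times[0,N]$ forces $n_k\to\om_0$ (up to subsequence). Applying the same continuity-at-$p_\infty$ selection, obtain compact $D_k\subset P$ with $|f_{n_k}(p)-f(p_\infty,n_k)|<1$ off $D_k$, and set $D:=\overline{\bigcup_k D_k}$, compact by $\KIV$. A short case analysis on $(p_k)$ --- either $p_k\in D$ for infinitely many $k$, which is immediately impossible by boundedness of $f$ on the compact $D\times[0,\om_0]\subset K$, or $p_k\in\a P\setminus D$ eventually, in which case the closeness estimate forces $|f(p_\infty,n_k)|\to\infty$, whence any fixed $p_*\in P\setminus D$ (which exists because $P\in\KIV$ forces $P$ non-compact) satisfies $|f(p_*,n_k)|\to\infty$, contradicting continuity of $f$ on the compact column $\{p_*\}\times[0,\om_0]$ --- completes the argument.

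The main obstacle is the asymmetric geometry at the deleted corner $(p_\infty,\om_0)$: ``infinity'' in $K$ simultaneously concerns the top row $P\times\{\om_0\}$ and the right column $\{p_\infty\}\times[0,\om_0)$, along neither of which any a priori control of $f$ is available. The device that links the two directions is the $\KIV$ hypothesis on $P$: it guarantees that the countably many compacts of $P$ furnished by row-by-row continuity at $p_\infty$ accumulate into a single compact $C$, so that one cutoff $(C\times[0,\om_0])\cup(\a P\times[0,N])$ can uniformly tame $f$ on all rows past $N$ at once.
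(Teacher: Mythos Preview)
Your argument is essentially correct but follows a different route from the paper's. The paper exploits the lemma proved immediately before this theorem, namely that $P\in\KIV$ is equivalent to property~2.2): every continuous function on $P$ with a limit at infinity is \emph{exactly constant} outside some compact. Applying this to each row $f(\cdot,n)$ (which extends continuously to $p_\infty$ since $\a P\times\{n\}$ is compact), the paper obtains a compact $P_n\subset P$ with $f(p,n)=f(p_\infty,n)$ for all $p\notin P_n$; one invocation of $\KIV$ then yields a single $P'\supset\bigcup_n P_n$, after which $f$ depends only on $n$ outside $P'\times[0,\om_0)$, and the continuous extension to $(p_\infty,\om_0)$ is immediate. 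This disposes of both claims at once (any $f\in C(K)$ extends to the compact $\a K$, hence is bounded) and avoids all $\e$-bookkeeping. Your approach, relying only on continuity at $p_\infty$ to produce $\e$-closeness on each row, is more self-contained in that it does not invoke the equivalence 2)$\Leftrightarrow$2.2), at the price of separate arguments for the two assertions and more $\e$-management.

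There is one small gap in your pseudo-compactness argument: after passing to a subsequence with $n_k\to\om_0$, you tacitly assume $n_k<\om_0$ when you write $f(p_\infty,n_k)$, but nothing so far rules out $n_k=\om_0$ for infinitely many $k$ (the slabs $\a P\times[0,N]$ do not cover the top row $P\times\{\om_0\}$). That case is dispatched in one line---the countable set $\{p_k\}\subset P$ is $\s$-compact, hence precompact by $\KIV$, so lies in some compact $C$, and $f$ is bounded on the compact $C\times\{\om_0\}$---but it should be stated before you proceed to the selection of the $D_k$.
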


\begin{proof}
Let $f\in C(K)$. Then for each $n\in\N$ the restriction of $f$ to $\a P\times \{n\}$ is continuous, and the restriction of $f$ to $P\times \{n\}$ is a continuous function that has a limit at infinity. Hence, by property 2), for every $n\in\N$ there is a compact set $P_n\subset P$ such that $f$ is constant (and equals to some $p_{n,\infty}$) outside the compact $ P_n \times \{n\}\subset P\times \{n\}$. $\{P_n\}_{n\in\N}$ is a countable family of compact sets in $P$, so there exists a compact set $P'\subset P$ which contains all of them. So outside $P'\times [0,\om_0)$ the function $f$ depends only on the number $n\in\N$ and is equals to $p_{n,\infty}$ on the ``$n$th row'' $P\times\{n\}$.

Consider $(p,\om_0)\in P\times\{\om_0\}$ with $p\notin P'$. Since $f$ is continuous at $(p,\om_0)$, we have $f(p,\om_0)=\lim\limits_{n\to\infty}f(p,n)$, this limit does not depend on $p$ outside $P'$, hence the function $f(p,\om_0)$ is constant outside $P'$, so the function $f$ can be extended by continuity at the point $(p_\infty,\om_0)$ by $f(p_\infty,\om_0)=\lim\limits_{n\to\infty}f(p_\infty,n)=\lim\limits_{p\to p_\infty}\lim\limits_{n \to\infty}f(p,n)$.
Thus, we have proved the property 1).
\end{proof}

\begin{cor}
For $K$ from the example \ref{ii_exnocr} we have
$
C(K)=C_b(K)=
\dot{C_0}(K)=M(C_0(K))\cong C(\a K)
=C(\be K)
$ (except the first equality, this is also true for any space with property 1)).
\end{cor}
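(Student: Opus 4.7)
The plan is to assemble the corollary from standard identifications together with the two facts established in the preceding theorem (property 1) and pseudo-compactness). No new analysis is required — the content is purely a bookkeeping consequence.

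First I would record the general identifications that hold for every locally compact Hausdorff space $K$ and were already recalled in \S1: $M(C_0(K))\cong C_b(K)\cong C(\be K)$. I would also observe the standard identification $\dot{C_0}(K)\cong C(\a K)$, which is just the statement that adjoining a unit to $C_0(K)$ is the same as appending one point $t_\infty$ to $K$ and extending functions by a constant value there; this was in fact already alluded to via the inclusion of $C_0(K)$ as the ideal of functions vanishing at $t_\infty$ in $C(\a K)$.

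Second, I would invoke the preceding theorem, which says that $K$ (from Example \ref{ii_exnocr}) satisfies property 1), i.e. $\a K=\be K$. This immediately gives $C(\a K)=C(\be K)$, and combined with the identifications above produces the central chain
\[
\dot{C_0}(K)\cong C(\a K)=C(\be K)\cong M(C_0(K))=C_b(K).
\]
This part of the corollary clearly uses only property 1), hence holds for any space with that property, as the parenthetical remark claims.

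Third, to insert $C(K)$ at the left end of the chain, I would use the pseudo-compactness statement proved in the theorem. Concretely, the proof of the theorem shows that every $f\in C(K)$ extends continuously to $\a K$; since $\a K$ is compact the extension is bounded, whence so is $f$ on $K$, giving $C(K)=C_b(K)$. Combined with the previous step this yields the full chain stated in the corollary. There is no real obstacle here; the only point requiring attention is making explicit that pseudo-compactness is what distinguishes the first equality from the rest, so that the parenthetical claim in the corollary is correctly justified.
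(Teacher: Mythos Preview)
Your proposal is correct and matches the paper's intent; in fact the paper states this corollary without proof, treating it as immediate from the preceding theorem together with the standard identifications $M(C_0(K))\cong C_b(K)\cong C(\be K)$ and $\dot{C_0}(K)\cong C(\a K)$ already recalled in \S1. Your three-step assembly (general identifications, then $\a K=\be K$ for the middle chain, then pseudo-compactness for $C(K)=C_b(K)$) is exactly the unpacking the paper leaves to the reader, and your justification of the parenthetical remark is on target.
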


To show that Troitsky's theorem does not hold for $C_0(K)$ as a module over itself with such $K$, we need one more intermediate step.

\begin{teo}
A system $\{x_j\}\subset C_b(K)$ is $(C_b(K),C_0(K))$-admissible if and only if it is $(C_b(K),C_b(K)) $-admissible.
\end{teo}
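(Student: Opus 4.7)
My plan is to exploit the corollary just proved: for the $K$ of Example \ref{ii_exnocr} we have $C_b(K)=\dot{C_0}(K)$, so every element of $C_b(K)$ splits uniquely as $x_0+c\cdot 1_K$ with $x_0\in C_0(K)$ and $c\in\mathbb{C}$. The forward implication, from $(C_b(K),C_b(K))$-admissibility to $(C_b(K),C_0(K))$-admissibility, is then essentially automatic: restricting the defining conditions to $x\in C_0(K)\subset C_b(K)$ leaves 2) and 3) unchanged, and since each partial sum $|x|^2\sum_{j\le N}|x_j|^2$ lies in the ideal $C_0(K)$ of $C_b(K)$, any uniform limit of such partial sums belongs to $C_0(K)$.

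For the reverse implication I would start by noting that condition 2) tested at each point $t\in K$ (using a bump $x_0\in C_0(K)$ with $x_0(t)=1$ and $\|x_0\|\le 1$ supplied by Lemma \ref{extrest}) yields the pointwise bound $\sum_j|x_j(t)|^2\le 1$ on $K$. Then, for $x=x_0+c\cdot 1_K\in C_b(K)$, the expansion
$$
|x|^2|x_j|^2 = |x_0|^2|x_j|^2 + c\,\bar{x_0}\,|x_j|^2 + \bar c\, x_0\,|x_j|^2 + |c|^2|x_j|^2
$$
reduces the task to four individual series. The first is uniformly convergent by hypothesis, while the two cross terms are controlled by the termwise Cauchy--Schwarz estimate
$$
\sum_{j>N}|x_0|\,|x_j|^2 \le \Bigl(\sum_{j>N}|x_0|^2|x_j|^2\Bigr)^{1/2}\Bigl(\sum_{j>N}|x_j|^2\Bigr)^{1/2} \le \Bigl(\sup_{t\in K}\sum_{j>N}|x_0|^2|x_j|^2\Bigr)^{1/2},
$$
which combines the pointwise bound with the hypothesis to yield a uniform $o(1)$ tail.

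The remaining piece $|c|^2\sum_j|x_j|^2$ requires uniform convergence of $\sum_j|x_j|^2$ on $K$, and my plan is to obtain this by Dini's theorem on the compact space $\a K=\be K$ (property 1) from the theorem above). Choosing $x_0\in C_0(K)$ with $x_0\equiv 1$ on a given compact $K'\subset K$ shows, via the hypothesis, that $S(t):=\sum_j|x_j(t)|^2$ is continuous on $K$ and bounded by $1$; by the pseudocompactness of $K$ (from the same theorem) it extends to $\tilde S\in C(\a K)$, while the partial sums $S_N$ already lie in $C(\a K)$ and are monotone increasing. Dini's theorem then furnishes uniform convergence as soon as $\lim_N S_N(t_\infty)=\tilde S(t_\infty)$ at the single boundary point $t_\infty:=(p_\infty,\om_0)$. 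This last equality---essentially ruling out a loss of mass in $j$ in the limit $t\to t_\infty$---is the main technical obstacle, and my plan is to establish it by invoking the iterated-limit representation $f(t_\infty)=\lim_{p\to p_\infty}\lim_{n\to\om_0}f(p,n)$ proved in the theorem above, applying admissibility to separated-variable bumps $x_0(p,n)=\phi(n)\chi(p)$ with $\phi\in C[0,\om_0]$ and $\chi\in C(\a P)$ vanishing appropriately, in order to interchange the sum with each inner limit in turn. Once this is done, $|c|^2\sum_j|x_j|^2$ converges uniformly, the full series converges in $C_b(K)$, and the pointwise inequality $\sum_j|x_j|^2\le 1$ forces condition 2).
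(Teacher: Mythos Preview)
Your approach is genuinely different from the paper's, and the parts you carry out in full are correct: the decomposition $x=x_0+c\cdot 1_K$, the Cauchy--Schwarz control of the cross terms via the pointwise bound $\sum_j|x_j(t)|^2\le 1$, and the reduction of everything to the uniform convergence of $\sum_j|x_j|^2$ are all fine. The paper, by contrast, never decomposes $x$ and never invokes Dini. It uses $P\in\KIV$ directly: since each of the countably many $x_j$ (and $x$) is constant on every row outside some compact in $P$, and a countable union of compacts in $P$ is again precompact, there is a \emph{single} compact $P'\subset P$ outside which all of them are constant on each row. One then picks $p'\notin P'$, multiplies $x$ by a bump $g\in C_0(P)$ equal to $1$ on $P''=P'\cup\{p'\}$ to obtain $\widetilde x\in C_0(K)$, applies the hypothesis to $\widetilde x$ on the compact column $P''\times[0,\om_0]$, and observes that outside $P''$ the whole series is determined by its values at $p'$.

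Your Dini step, however, has a genuine gap. Dini on $\a K$ needs $\lim_N S_N(t_\infty)=\tilde S(t_\infty)$, and your plan to obtain this by ``interchanging the sum with each inner limit in turn'' using separated-variable bumps does not go through for the second (outer) limit $p\to p_\infty$. Any bump $x_0\in C_0(K)$ restricts on the $\om_0$-row to a function in $C_0(P)$, hence vanishes at $p_\infty$; admissibility applied to such $x_0$ therefore gives uniform convergence of $\sum_j|x_j(p,\om_0)|^2$ only on compacta of $P$, not on deleted neighbourhoods of $p_\infty$, which is exactly what you would need to swap $\sum_j$ with $\lim_{p\to p_\infty}$. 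The missing ingredient is precisely the paper's structural observation above: once you know all $x_j$ are constant on each row outside a fixed compact $P'\subset P$, the series $\sum_j|x_j(p,\om_0)|^2$ is literally constant for $p\notin P'$, the interchange is trivial, and Dini goes through --- but at that point the paper's direct localisation is shorter and the Dini machinery is no longer needed.
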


\begin{proof}
An implication $\Leftarrow$ is obvious; let us prove the inverse.

Let $\{x_j\}\subset C_b(K)$ be $(C_b(K),C_0(K))$-admissible, i.e., for every $x\in C_0(K)$ we have

\begin{enumerate}
\item[1)] 
the series $\sum_i \<x,x_i\>\<x_i,x\>$ converges in norm (i.e., uniformly);
\item[2)]
its sum is bounded by $\<x,x\>$; 
\item[3)]$\|x_i\|\le 1$ for any $i$.
\end{enumerate}

Let us take an arbitrary function $x\in C_b(K)$ and show that these conditions are also satisfied for it (it suffices to show 1) and 2), obviously).

Similar to the previous proof, there exists a compact set $P'\subset P$ such that the function $x$ and all the functions $x_j$ are constant outside $P'\times[0,\om_0]$ on each ``row''. There exists $p'\in P\setminus P'$, denote $P''=P'\cup\{p'\}$.

There exists a function $g\in C_0(P)$ such that $g(p)=1$ on $P''$, $|g(p)|\le1$ on $P$.
Define
$\widetilde{x}\in C_0(K)$ by the formula $\widetilde{x}(t)=x(t)g(p)$, where $t=(p,n)\in K$. $\widetilde{x}$ satisfies conditions 1) and 2) on $K$, and hence on the set $P''\times [0,\om_0]$ on which $\widetilde{x}=x$ .
Outside $P''$ we have
$\sum\limits_i \<x,x_i\>\<x_i,x\>(p,n)=
\sum\limits_i \<x,x_i\>\<x_i,x\>(p'',n)
$. That is, outside $P''$ conditions 1) and 2) are satisfied, since they are satisfied for $p=p'$.
If the uniform convergence on each of two sets holds, then it also holds on their union; if the inequality holds on sets, then it also holds on their union. Hence, conditions 1), 2) are satisfied on the whole $K$ for any $x\in C_b(K)$. Q.E.D.
\end{proof}

\begin{cor}
A set $Y\subset C_0(K)$ is $(C_b(K),C_0(K))$-totally bounded if and only if it is $(C_b(K),C_b(K))$-totally bounded.
\end{cor}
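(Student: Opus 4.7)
The plan is to derive the corollary immediately from the preceding theorem. Unwinding Definition~\ref{dfn:totbaundset}, $(C_b(K),C_0(K))$-total boundedness of $Y\subset C_0(K)$ is the existence, for every outer $C_0(K)$-admissible $X\subset C_b(K)$, every countable family of states $\F$, and every $\e>0$, of a finite $\e$-net in $Y$ for the pseudometric $d_{X,\F}$; and $(C_b(K),C_b(K))$-total boundedness is the analogous statement where $X$ ranges over outer $C_b(K)$-admissible systems in $C_b(K)$.

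The preceding theorem asserts that these two classes of admissible systems coincide. Hence the family of pseudometrics $\{d_{X,\F}\}$ entering the two definitions is literally the same collection of functions on $C_b(K)\times C_b(K)$, and in particular the same on $Y\times Y\subset C_0(K)\times C_0(K)$. Since the $\e$-net condition for a fixed subset $Y$ and a fixed pseudometric is a property of $Y$ and of the pseudometric alone (the candidate $\e$-nets $y_1,\dots,y_n$ are finite subsets of $Y$, which lie in both $C_0(K)$ and $C_b(K)$), the two total boundedness conditions on $Y$ are equivalent.

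The only thing to be careful about is that going from $(C_b(K),C_b(K))$ to $(C_b(K),C_0(K))$ one is quantifying over what is \emph{a priori} a smaller class of admissible $X$, so this direction is obvious; the nontrivial direction, that $(C_b(K),C_0(K))$-total boundedness forces total boundedness against every $C_b(K)$-admissible system, is precisely where the previous theorem does the work by showing there are no extra admissible systems. There is no further obstacle: no estimate, no approximation argument, just the substitution of the equality of admissible systems into the definition.
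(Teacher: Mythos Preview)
Your argument is essentially the one the paper intends (the corollary is stated without proof, as an immediate consequence of the preceding theorem), and the logic is right: once the admissible systems coincide, the quantifier over $X$ in the two definitions is the same, and the $\e$-net condition depends only on the pseudometric restricted to $Y\times Y$.

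There is one small inaccuracy in your write-up worth patching. You assert that the families of pseudometrics are ``literally the same collection of functions''. Strictly speaking they are not, because in the $(C_b(K),C_0(K))$ setting the $\f_k$ in $\F$ are states on $C_0(K)$, whereas in the $(C_b(K),C_b(K))$ setting they are states on the unital algebra $C_b(K)$. This does not affect the conclusion: for $x,y\in Y\subset C_0(K)$ one has $\<x-y,x_i\>\in C_0(K)$, so only $\f_k|_{C_0(K)}$ enters. Every state on $C_0(K)$ extends to a state on $C_b(K)$, and every state on $C_b(K)$ restricts to a positive functional of norm $\le 1$ on $C_0(K)$ (hence dominated by a genuine state or identically zero). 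Thus each pseudometric arising on one side is dominated on $Y\times Y$ by one arising on the other, and the two total boundedness conditions coincide. With this remark added, your proof is complete and matches the paper's intended reasoning.
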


\begin{rk}
Using \cite[4.2]{Walker} one can construct more complex examples of spaces with the described properties, by taking instead of $[0,\om_0]$ an arbitrary infinite compact set and 
choosing for $P$ instead of $\om_1$
a sufficiently large ordinal if it necessary --- to use the condition that on each ``row'' the function is eventually constant.
\end{rk}

\begin{teo}\label{ii_nocr}
The unit ball in $C_0(K)$ (and hence the image of the unit ball with respect to the identity operator $Id:C_0(K)\to C_0(K)$) is $(C_b(K),C_0(K))$- totally bounded, but the identity operator is not $C_0(K)$-compact.
\end{teo}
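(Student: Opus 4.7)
The plan is to prove the two assertions independently, using as a black box the immediately preceding corollary, which identifies $(C_b(K),C_0(K))$-total boundedness of a subset of $C_0(K)$ with its $(C_b(K),C_b(K))$-total boundedness.

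Non-$C_0(K)$-compactness of the identity is the easy half. Commutativity gives $\theta_{x,y}(z)=x\<y,z\>=x\bar y\,z$, so $\theta_{x,y}$ is multiplication by $x\bar y\in C_0(K)$; taking norm closures, $\mathbf{K}(C_0(K),C_0(K))$ is identified with the multiplication representation of $C_0(K)$ on itself. Since $K\in\KII$ is non-$\s$-compact, hence in particular non-compact, the unit $1$ does not belong to $C_0(K)$, so the identity operator (which would be multiplication by $1$) cannot lie in $\mathbf{K}(C_0(K),C_0(K))$.

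For total boundedness, recall from the earlier corollary that $C_b(K)=\dot{C_0}(K)=M(C_0(K))$ is a unital commutative $C^*$-algebra. Regarded as a Hilbert module over itself, it admits $\{1\}$ as a one-element standard frame, and the identity operator equals $\theta_{1,1}$, hence is $C_b(K)$-compact. The inverse direction of the $\A$-compactness criterion \cite[Theorem~2.4]{TroitFuf2020}, valid for arbitrary modules, then yields that the unit ball of $C_b(K)$ is $(C_b(K),C_b(K))$-totally bounded.

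The unit ball of $C_0(K)$ is contained in the unit ball of $C_b(K)$, so by the hereditary property of total boundedness in a uniform space (for any pseudometric $d_{X,\F}$ a finite $\e/2$-net of the ambient ball refines to a finite $\e$-net of the subset by replacing each net point whose $\e/2$-neighborhood meets $C_0(K)$ with an element of that intersection), the unit ball of $C_0(K)$ is also $(C_b(K),C_b(K))$-totally bounded. An application of the preceding corollary then promotes this to $(C_b(K),C_0(K))$-total boundedness and finishes the proof. The only substantive input is the admissibility-equivalence theorem (and its corollary) proved just above, which encodes the row-constancy structure of $C_b(K)$ arising from $P\in\KIV$; given that, the present theorem is essentially a repackaging of the triviality that the identity on a unital $C^*$-algebra is compact, combined with the hereditary property of totally bounded sets.
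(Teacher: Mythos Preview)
Your proof is correct and follows essentially the same route as the paper: establish $(C_b(K),C_b(K))$-total boundedness of the $C_b(K)$-ball via compactness of the identity on the unital algebra $C_b(K)$, pass to the sub-ball in $C_0(K)$, and invoke the preceding corollary. The only difference is in the easy half: for non-compactness the paper quotes the fact that the range of an $\A$-compact operator is countably generated (and $C_0(K)$ is not), whereas you use the identification $\mathbf{K}(C_0(K))\cong C_0(K)$ together with $1\notin C_0(K)$; both are standard one-line arguments.
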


\begin{proof}
The unit ball in $C_0(K)$ is a subset of the unit ball in $C_b(K)$, which is $(C_b(K),C_b(K))$-totally bounded since it is the image of the unit ball with respect to the identity operator $Id:C_b(K)\to C_b(K)$, which is $C_b(K)$-compact because $C_b(K)$ is unital and it is countably generated as a module over itself. Hence, the unit ball in $C_0(K)$ is also $(C_b(K),C_b(K))$-totally bounded, and by the previous corollary it is $(C_b(K),C_0(K))$-totally bounded.

The idenitity operator is not $C_0(K)$-compact since the image of $\A$-compact operator must be countably generated (\cite[Lemma 1.10]{Troitsky2020JMAA}), but $C_0(K)$ is not.
\end{proof}

Let us also prove that for the constructed example there is no non-standard frames, and we must start with the following useful lemma.

\begin{lem}\label{frame_restrict}
Let $K$ be a locally compact Hausdorff space, $A$ its closed subset. If there is a frame $\{x_j\}_{j\in J}$ (standard or not) in $C_0(K)$, then its restriction to $A$ $\{y_j\}_{j\in J }$ is a frame in $C_0(A)$ in the same sense.
\end{lem}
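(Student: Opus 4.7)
The approach is to transfer the frame property from $C_0(K)$ down to $C_0(A)$ by extending both test functions and states from $A$ to $K$, applying the frame inequalities upstairs, and noticing that the resulting scalar quantities agree term-by-term with the ones needed downstairs. The three ingredients are Lemma~\ref{extrest} for extending functions, the Riesz representation (\cite[436K]{Frem4}) to identify states on $C_0(A)$ with Radon probability measures, and Lemma~\ref{fr_cr} for the working state-characterization of (possibly non-standard) frames.

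Concretely: let $c_1,c_2$ be the frame constants of $\{x_j\}$, fix $y\in C_0(A)$ and a state $\psi$ on $C_0(A)$. Use Lemma~\ref{extrest} to choose $\tilde y\in C_0(K)$ with $\tilde y|_A=y$, and write $\psi(f)=\int_A f\,d\mu$ for a Radon probability measure $\mu$ on $A$. Since $A$ is closed and hence Borel in $K$, the same $\mu$ is a Radon measure on $K$ supported on $A$, so $\varphi(g):=\int_A g|_A\,d\mu$ defines a state on $C_0(K)$. Because $\tilde y|_A=y$, $x_j|_A=y_j$, and $\mu$ is concentrated on $A$, one obtains the identities
$$
\varphi(\<\tilde y,\tilde y\>)=\psi(\<y,y\>),\qquad \varphi(\<\tilde y,x_j\>\<x_j,\tilde y\>)=\psi(\<y,y_j\>\<y_j,y\>)
$$
for every $j$. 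Plugging $(\tilde y,\varphi)$ into the frame inequalities of Lemma~\ref{fr_cr} for $\{x_j\}$ and reading off what this says for $(y,\psi)$ then yields
$$
c_1\,\psi(\<y,y\>)\le \sum_j \psi(\<y,y_j\>\<y_j,y\>)\le c_2\,\psi(\<y,y\>),
$$
so by Lemma~\ref{fr_cr} again $\{y_j\}$ is a frame in $C_0(A)$ with the same constants.

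For the \emph{standard} case one additionally observes that norm convergence of $\sum_j\<\tilde y,x_j\>\<x_j,\tilde y\>$ in $C_0(K)$ is uniform convergence on $K$ and hence on $A$, so the restricted series converges in norm in $C_0(A)$; thus standard frames restrict to standard frames. The only point that needs a moment of care is the non-unitality of $C_0(A)$ when passing between states and Radon probability measures, but this is routine once one recalls that a Radon probability measure on the closed subset $A$ extends tautologically to a Radon probability measure on the ambient locally compact space. No step looks genuinely difficult; the argument is really just a dualization of Lemma~\ref{extrest}.
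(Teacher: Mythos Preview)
Your proof is correct and follows essentially the same approach as the paper's: extend a test element of $C_0(A)$ to $C_0(K)$ via Lemma~\ref{extrest}, identify a state on $C_0(A)$ with a Radon measure and push it forward to a state on $C_0(K)$ supported on $A$, then read off the frame inequalities of Lemma~\ref{fr_cr} term by term; the standard case is handled by restriction of uniform convergence. The only cosmetic difference is that you spell out the term-by-term identities and the probability-measure subtlety a bit more explicitly than the paper does.
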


\begin{proof}
Since uniform convergence on a set implies uniform convergence on a subset, the proposition is obvious for standard frames. Let us prove it for non-standard.

Take $x\in C_0(A)$ and let $\f$ be a state on $C_0(A)$, 
i.e. a Radon measure $\mu$ on $A$. It can be extended to a measure on whole $K$ by zero outside $A$ --- that is, to the state $\f'$ on $C_0(K)$.
The function $x$ can be extended to the function $x'\in C_0(K)$ due to Lemma \ref{extrest}.
Since $\{x_j\}_{j\in J}$ is a frame, we have

$$
c_1\f'(\<x',x'\>)\le
\sum\limits_j \f'(\<x',x_j\>\<x_j,x'\>)\le
c_2\f'(\<x',x'\>).
$$

Since $\f'$ is a measure which is actually calculated on the restrictions of functions on $A$, for $x$ we have

$$
c_1\f(\<x,x\>)\le
\sum\limits_j \f(\<x,y_j\>\<y_j,x\>)\le
c_2\f(\<x,x\>),
$$
i.e. $\{y_j\}_{j\in J}$ is a non-standard frame in $C_0(K)$.
\end{proof}

\begin{teo}\label{ii_nocr_nofr}
For $K$ from the example \ref{ii_exnocr} there are no non-standard frames in $C_0(K)$.
\end{teo}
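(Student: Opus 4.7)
The strategy is to restrict any alleged frame to a closed subspace of $K$ isomorphic to $P$ and invoke the forthcoming result for $\KIV$ that will be established in \S4. First I would observe that $\a{P} \times \{\om_0\}$ is closed in the compact space $\a{P} \times [0,\om_0]$, and its intersection with $K$ equals exactly $P \times \{\om_0\}$, since the only removed point $(p_\infty,\om_0)$ lies in $\a{P} \times \{\om_0\}$ but not in $K$; hence $A := P \times \{\om_0\}$ is closed in $K$. The obvious projection identifies $A$ with $P$ homeomorphically, and by the hypothesis of Example~\ref{ii_exnocr} we have $P \in \KIV$.

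Suppose now for contradiction that $\{x_j\}_{j\in J}$ is a non-standard frame in $C_0(K)$. Lemma~\ref{frame_restrict} then provides that the restrictions $\{x_j|_A\}_{j\in J}$ form a non-standard frame in $C_0(A) \cong C_0(P)$ with the same frame constants. Invoking the \S4 theorem --- namely, that $C_0(P)$ admits no frame at all (standard or not) whenever $P \in \KIV$ --- immediately yields a contradiction, completing the proof.

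The main obstacle lies not in this theorem itself, where the argument reduces to a single step, but rather in the \S4 result on which it relies. A natural route for the latter would combine the equivalent property 2.1 (every $f \in C_0(P)$ has compact support when $P \in \KIV$) with the pointwise lower bound $\sum_j |u_j(p)|^2 \ge c_1 > 0$ that Lemma~\ref{fr_cr} forces on any frame via evaluation states: each frame element has compact support, yet the supports must be ``thick enough'' to cover $P$ with positive lower frame bound everywhere, while every countable union of compacts is precompact in $P$. Controlling the possibly uncountable index set $J$ is the crux, and handling it presumably requires either a transfinite exhaustion or a measure-theoretic argument in the bidual.
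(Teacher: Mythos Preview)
Your argument is correct and matches the paper's own proof essentially line for line: restrict the putative frame to the closed subset $P\times\{\om_0\}\cong P$ via Lemma~\ref{frame_restrict}, then invoke Theorem~\ref{iv_nofr} for $P\in\KIV$ to obtain a contradiction. Your additional justification that $P\times\{\om_0\}$ is closed in $K$ is a welcome clarification, and your closing remarks about the difficulty of Theorem~\ref{iv_nofr} are reasonable but lie outside the scope of the present statement.
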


\begin{proof}
Suppose that there exists a non-standard frame $\{x_j\}_{j\in J}$ in $C_0(K)$.
Then its restriction $\{y_j\}_{j\in J}$ to $P\times\{\om_0\}$ is a non-standard frame in $C_0(P\times\{\om_0\})$, so $C_0(P)$ also has a frame, which cannot be, as we will see later (theorem \ref{iv_nofr}). A contradiction.
\end{proof}

\section{The properties of $\KIII$}

\begin{ex}\label{iii_good}
Due to \ref{iii_stfr} and \ref{iii_dop} as a ``good'' example when there exists a standard frame it suffices to take $K=\bigsqcup\limits_{\l\in\L}K_\l$ with uncountable $\L$, where all $K_\l$ are $\s$-compact. Indeed, any compact set in $K=\bigsqcup\limits_{\l\in\L}K_\l$ intersects only a finite number of $K_\l$, which means that a $\s$-compact set intersects only a countable number of them. Hence, the complement to any $\s$-compact subset contains some $K_\be$, which is an open set in $K$.
\end{ex}

Let us now introduce an example when the $\A$-compactness criterion is not satisfied, which implies that there is no standard frame; there is also no frames for it at all.

\begin{ex}\label{iii_bad}
Let $K=P_1\sqcup P_2$,
where $P_1\in\KIV$, $P_2\in\KI$, $\KII$ or $\KIII$. A $\s$-compact set in $K$ is a union of $\s$-compact sets from $P_1$ and $P_2$ respectively. The complement to a $\s$-compact set in $P_1$ is open, so the same is true for $K$. However, one can reach a point at infinity with a countable set of compact sets from $P_2$, so $K\in\KIII$.
\end{ex}

\begin{teo}\label{iii_bad_th}
Let $K$ be a space from example \ref{iii_bad}. Then the operator $F:C_0(K)\to C_0(K)$ of multiplication by the identity function on $P_1$ and by zero function on $P_2$ is not $C_0(K)$-compact, but the image of the unit ball with respect to this operator is $(C_b(K),C_0(K))$-totally bounded. Obviously, this operator is adjointable.
Also, $C_0(K)$ has no frames.
\end{teo}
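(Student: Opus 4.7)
The claim has four parts: $F$ is adjointable, $F$ is not $C_0(K)$-compact, $F(B)$ is $(C_b(K),C_0(K))$-totally bounded, and $C_0(K)$ admits no frames. I plan to reduce each to a property of $P_1$ that is either known from \cite{Fuf2021faa} or will be established in \S4. Adjointability is immediate since $F$ is multiplication by the self-adjoint projection $\chi_{P_1} \in C_b(K) = M(C_0(K))$. The image of $F$ is the ideal $\mathcal J = \{f \in C_0(K): f|_{P_2} = 0\}$, which under restriction to $P_1$ is canonically isomorphic to $C_0(P_1)$; since $C_0(K) \cong C_0(P_1) \oplus C_0(P_2)$ and the second summand acts trivially on $\mathcal J$, countable generation of $\mathcal J$ over $C_0(K)$ coincides with countable generation of $C_0(P_1)$ over itself, hence with $\s$-unitality of $C_0(P_1)$, equivalently with $\s$-compactness of $P_1$. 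Since $P_1 \in \KIV$ is not $\s$-compact, $\mathcal J$ is not countably generated, and \cite[Lemma 1.10]{Troitsky2020JMAA} rules out $\A$-compactness of $F$.

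For total boundedness I plan to reduce to the corresponding statement for $C_0(P_1)$, which is established in \cite{Fuf2021faa} for every space in $\KIV$. Given $X = \{x_i\} \subset C_b(K)$ outer $C_0(K)$-admissible and states $\F = \{\f_k\}$ on $C_0(K)$, set $\tilde X = \{x_i|_{P_1}\}$; plugging an arbitrary $\tilde z \in C_0(P_1)$ (extended by zero to $K$) into the admissibility inequality shows that $\tilde X$ is outer $C_0(P_1)$-admissible. Let $\mu_k$ be the Radon probability measure representing $\f_k$, $c_k := \mu_k(P_1) \le 1$, and set $\hat \f_k(f) := c_k^{-1}\int_{P_1} f\, d\mu_k$ when $c_k>0$ (a state on $C_0(P_1)$; indices with $c_k=0$ contribute nothing and may be discarded). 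Because every $x \in F(B)$ vanishes on $P_2$, a direct calculation gives $\f_k(\<x-y, x_i\>) = c_k \hat \f_k(\<x|_{P_1} - y|_{P_1}, x_i|_{P_1}\>)$ for $x,y \in F(B)$, whence $d_{X,\F}(x,y) \le d_{\tilde X, \hat \F}(x|_{P_1}, y|_{P_1})$. Any $\e$-net in the unit ball of $C_0(P_1)$ for $d_{\tilde X, \hat \F}$ therefore pulls back, via extension by zero, to an $\e$-net in $F(B)$ for $d_{X,\F}$.

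For the absence of frames: if $\{x_j\}_{j \in J}$ were a frame in $C_0(K)$, then by lemma \ref{frame_restrict} applied to the clopen (hence closed) subset $P_1 \subset K$ the restrictions $\{x_j|_{P_1}\}$ would form a frame in $C_0(P_1)$; this contradicts theorem \ref{iv_nofr} of \S4, which asserts the absence of frames for any space in $\KIV$. The principal technical step is the reduction in the total-boundedness argument; everything hinges on the normalization factor $c_k \le 1$ landing on the favourable side of the inequality, which is exactly what makes the transfer of admissibility, states and pseudometrics between $K$ and $P_1$ clean and survives the supremum over $k$ in the definition of $d_{X,\F}$.
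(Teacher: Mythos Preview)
Your proposal is correct and follows essentially the same approach as the paper: non-compactness via uncountable generation of the image $C_0(P_1)$ and \cite[Lemma~1.10]{Troitsky2020JMAA}, total boundedness by restricting the admissible system and the states to $P_1$ and invoking the $\KIV$ result of \cite{Fuf2021faa}, and absence of frames via Lemma~\ref{frame_restrict} and Theorem~\ref{iv_nofr}. Your explicit normalization step (the factor $c_k\le 1$) spells out what the paper glosses over when it simply asserts that the seminorm on the image ``is calculated as a seminorm on $C_0(P_1)$'' and appeals to ``reasons similar to \cite[Theorem~2.5]{Fuf2021faa}''.
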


\begin{proof}
The image of this operator is an uncountably generated module $C_0(P_1)$, so the operator cannot be $C_0(K)$-compact.

The image of the unit ball is the unit ball in $C_0(P_1)$, and since the restriction of the Radon measure to a measurable subset is the Radon measure (\cite[416R(b)]{Frem4}), the seminorm $\nu_{X,\F}$ on the image has the following form

$$
\nu_{X,\F}(x)^2=
\sup_k 
\sum_{i=k}^\infty |
\int\limits_K\overline{x(t)}\cdot x_i(t)d\mu_k(t)
|^2
=
\sup_k 
\sum_{i=k}^\infty |
\int\limits_{K'}\overline{x(t)}\cdot x_i(t)d\mu_k(t)
|^2
=$$
$$=
\sup_k 
\sum_{i=k}^\infty |
\int\limits_{P_1\sqcup P_2}\overline{x(t)}\cdot x_i(t)d\mu_k(t)
|^2
=
\sup_k 
\sum_{i=k}^\infty |
\int\limits_{P_1}\overline{x(t)}\cdot x_i(t)d\mu_k(t)
|^2,
$$
that is, the seminorm on the image is calculated as a seminorm on $C_0(P_1)$. Obviously, the restriction to $P_1$ of any $(C_b(K),C_0(K))$-admissible system is $(C_b(P_1),C_0(P_1))$-admissible.
Hence, the unit ball in $C_0(P_1)$
is $(C_b(K),C_0(K))$-totally bounded by reasons similar to \cite[Theorem 2.5]{Fuf2021faa} because the unit ball in $C_0(P_1)$ is $(C_b(P_1),C_0(P_1))$-totally bounded
(more specifically, since the elements of the $\e$-net, which are functions on $P_1$, can be extended to whole $K$).

If there exists a frame in $C_0(K)$, its restriction to $P_1$ would also be a frame, but as we will see later (theorem \ref{iv_nofr}), $C_0(P_1)$ has no frames, so there are no frames in $C_0(K)$ too.
\end{proof}

There is also an intermediate example of space: there is no standard frame, but a non-standard one exists.
But first let us prove the following another one useful lemma.

\begin{lem}
Let $P_1, P_2$ be locally compact Hausdorff spaces, and both $C_0(P_1)$ and $C_0(P_2)$ have frames $\{x_j\}_{j\in J}$ (with constants $ d_1,d_2$) and $\{y_i\}_{i\in I}$ (with constants $c_1,c_2$) respectively.
Then in $C_0(K)$, where $K=P_1\sqcup P_2$ there also exists a frame $\{w_g\}_{g\in G}=\{x_j\}_{j\in J}\cup \{y_i\}_{i\in I}$. If each of the original frames is standard, then $\{w_g\}_{g\in G}$ is standard too.
\end{lem}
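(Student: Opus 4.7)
The approach exploits that $P_1$ and $P_2$ are clopen in $K = P_1 \sqcup P_2$, so every $f \in C_0(K)$ splits uniquely as $f = u + v$ with $u \in C_0(P_1)$, $v \in C_0(P_2)$ (each extended by zero to $K$), and each $x_j$ (resp.\ $y_i$) lives in $C_0(K)$ with support in $P_1$ (resp.\ $P_2$). Because of the disjoint supports the identities
\[
\langle f, f \rangle = \langle u, u \rangle + \langle v, v \rangle, \quad \langle f, x_j \rangle = \langle u, x_j \rangle, \quad \langle f, y_i \rangle = \langle v, y_i \rangle
\]
hold in $C_0(K)$, and therefore
\[
\sum_{g \in G} \langle f, w_g \rangle \langle w_g, f \rangle = \sum_{j \in J} \langle u, x_j \rangle \langle x_j, u \rangle + \sum_{i \in I} \langle v, y_i \rangle \langle y_i, v \rangle,
\]
with the two pieces on the right having disjoint supports in $K$.

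For the standard case, each original series converges in norm in $C_0(P_k)$; since the norm of an element of $C_0(K)$ supported on $P_1 \cup P_2$ is the maximum of the norms of its $P_1$- and $P_2$-parts, the total series converges in norm in $C_0(K)$. Applying the two original frame inequalities to $u$ and $v$ and adding gives
\[
\min(c_1,d_1)\langle f,f\rangle \le \sum_{g} \langle f, w_g \rangle \langle w_g, f \rangle \le \max(c_2,d_2)\langle f,f\rangle,
\]
so $\{w_g\}$ is a standard frame with constants $\min(c_1,d_1)$ and $\max(c_2,d_2)$.

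For the non-standard case, I would use Lemma \ref{fr_cr} and verify the two-sided inequality after applying every state $\f$ on $C_0(K)$. Such $\f$ is given by a Radon probability measure $\mu$ on $K$, which splits as $\mu = \mu_1 + \mu_2$ with $\mu_k$ supported on the Borel set $P_k$. With $\l_k := \mu_k(K)$, when $\l_k > 0$ the normalization $\l_k^{-1}\int(\cdot)\,d\mu_k$ is a state on $C_0(P_k)$, so Lemma \ref{fr_cr} applied to the corresponding original frame and rescaled by $\l_k$ produces the desired bound on the $P_k$-piece; when $\l_k = 0$ the corresponding inequality is trivial. Summing over $k$ and using that all crossed contributions (such as $\int \langle v,v\rangle\,d\mu_1$ or $\int \langle u,x_j\rangle \langle x_j,u\rangle\,d\mu_2$) vanish by support disjointness yields the required bound with constants $\min(c_1,d_1)$ and $\max(c_2,d_2)$.

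The obstacle is mostly bookkeeping: tracking which summands vanish against which $\mu_k$, handling the degenerate case $\l_k = 0$ trivially, and noting that ultraweak convergence of the combined series follows from Remark \ref{rk_bes} once the Bessel upper estimate with constant $\max(c_2,d_2)$ has been verified from the finite-partial-sum inequalities. No new ideas beyond the support-disjointness principle are needed.
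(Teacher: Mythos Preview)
Your proposal is correct and follows essentially the same route as the paper: split both the function and the state along the clopen decomposition $K=P_1\sqcup P_2$, apply the given frame inequalities on each piece, and recombine to obtain constants $\min(c_1,d_1)$ and $\max(c_2,d_2)$. You are in fact slightly more careful than the paper, which simply writes $\f=\f_1+\f_2$ with ``$\f_1,\f_2$ states on $P_1,P_2$'' without addressing the normalization $\l_k$ or the degenerate case $\l_k=0$; your treatment of these points is the right way to make that step precise.
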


\begin{proof}
Uniform convergence on a finite number of sets implies uniform convergence on their union. The same is true for the inequalities, so the case when both frames are standard is obvious. Let us prove for non-standard.

Let $\f$ be a state on $C_0(K)$, i.e. a measure on $K$. It can be represented as the sum of measures on $P_1$ and $P_2$ respectively, i.e. $\f=\f_1+\f_2$, where $\f_1, \f_2$ are states on $P_1, P_2$ respectively. It is also possible to represent in such a way the function $w\in C_0(K)$, $w=x+y$ and
$\<w,w\>=|w|^2=\<x,x\>+\<y,y\>$. Hence we get that

$$
\sum\limits_g \f(\<w,w_g\>\<w_g,w\>)=
\sum\limits_j \f_1(\<x,x_j\>\<x_j,x\>)+\sum\limits_i \f_2(\<y,y_i\>\<y_i,y\>),
$$
and hence

$$
\sum\limits_g \f(\<w,w_g\>\<w_g,w\>)\le
d_2\f_1(\<x,x\>)+c_2\f_2(\<y,y\>)=
$$
$$=
d_2\f(\<x,x\>)+c_2\f(\<y,y\>)\le
\max\{d_2, c_2\}(\f(\<x,x\>)+\f(\<y,y\>))=
\max\{d_2, c_2\}\f(\<w,w\>).
$$

Similarly, we have that
$$
\min\{d_1, c_1\}\f(\<w,w\>)
\le
\sum\limits_g \f(\<w,w_g\>\<w_g,w\>)\le
\max\{d_2, c_2\}\f(\<w,w\>),
$$
i.e. $\{w_g\}_{g\in G}$ is a frame in $C_0(K)$. Q.E.D.

\end{proof}

\begin{ex}\label{iii_bg}
Let $K=P_1\sqcup P_2$,
where $P_1$ is the space from example \ref{iii_good}, $P_2$ is the space from example \ref{ii_exfr}. Similar to the previous discussion, $K\in\KIII$.
\end{ex}

\begin{teo}
There is no standard frame in $C_0(K)$, but there exists a non-standard one.
\end{teo}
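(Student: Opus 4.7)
The plan is to prove the two assertions separately, relying entirely on the structural lemmas and examples already established. Recall the setup: $P_1$ is the clopen piece from Example \ref{iii_good}, hence $C_0(P_1)$ carries a normalized standard frame by Theorem \ref{iii_stfr}; and $P_2$ is the clopen piece from Example \ref{ii_exfr}, which lies in $\KII$ and for which Theorem \ref{ii_fr} furnishes a non-standard frame in $C_0(P_2)$.

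For the non-existence of a standard frame in $C_0(K)$, I argue by contradiction. Suppose $\{x_j\}_{j\in J}$ were a standard frame in $C_0(K)$. Since $P_2$ is clopen in $K$, it is in particular a closed subset, so Lemma \ref{frame_restrict} applies and the restricted family $\{x_j|_{P_2}\}_{j\in J}$ is a standard frame in $C_0(P_2)$. But $P_2 \in \KII$, so Theorem \ref{ii_nostfr} asserts that no standard frame can exist in $C_0(P_2)$, which is the desired contradiction.

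For the existence of a non-standard frame in $C_0(K)$, I invoke the combining lemma proved immediately before Example \ref{iii_bg}. Its hypotheses are satisfied: $C_0(P_1)$ has a (standard) frame from Example \ref{iii_good}, and $C_0(P_2)$ has a (non-standard) frame from Theorem \ref{ii_fr}. The lemma then produces a frame $\{w_g\}_{g\in G}$ in $C_0(K)$ by taking the disjoint union of the two families, each extended by zero on the opposite clopen piece. By the first part, this frame cannot be standard, so it is genuinely non-standard. (One can also see non-standardness directly: the series indexed by the $P_2$-component of the frame converges only in the ultraweak topology on the $P_2$-support of a test function $x\in C_0(K)$, inherited from Theorem \ref{ii_fr}.)

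No serious obstacle is expected; the argument is a clean assembly of prior results. The one point worth checking is the coherence of the combining lemma when the two input frames have \emph{different} convergence qualities (norm vs.\ ultraweak): since the $P_1$- and $P_2$-contributions are supported on disjoint clopen sets, the two series decouple and the combined series converges in whichever (weakest) sense the components do, which suffices for the definition of a non-standard frame via Lemma \ref{fr_cr}.
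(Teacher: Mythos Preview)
Your proof is correct and follows essentially the same approach as the paper's: restrict a hypothetical standard frame to the $\KII$-piece $P_2$ and invoke Theorem~\ref{ii_nostfr} for the contradiction, then combine the two known frames via the preceding lemma to obtain a (necessarily non-standard) frame on $K$. You even correct what appears to be a slip in the paper's text, which writes ``restriction to $P_1$'' where the argument clearly requires $P_2$.
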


\begin{proof}
If $C_0(K)$ has a standard frame, its restriction to $P_1$ would also be a standard frame, but since $P_2\in\KII$, $C_0(P_2)$ has no standard frame (theorem \ref {ii_nostfr}), hence $C_0(K)$ also does not have.

Let us show that there exists a non-standard one. We know that in $C_0(P_1)$ there is a normalized standard frame $\{x_j\}_{j\in J}$, and in $C_0(P_2)$ there is a frame $\{y_i\}_{ i\in I}$ with constants $c_1=1$, $c_2=2$. 
By the previous lemma
their union $\{w_g\}_{g\in G}=\{x_j\}_{j\in J}\cup\{y_i\}_{i\in I}$ is a frame in $ C_0(K)$.
\end{proof}

 \section{Non-existence of non-standard frames in $\KIV$}

\begin{teo}\label{iv_nofr}
Let $K\in\KIV$. Then the $\dot{C_0}(K)$-module $C_0(K)$ has no frame.
\end{teo}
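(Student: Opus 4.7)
Assume, aiming at a contradiction, that $\{x_j\}_{j\in J}$ is a non-standard frame in $C_0(K)$ with constants $c_1,c_2>0$ as characterized by Lemma \ref{fr_cr}. We may suppose $K$ is non-compact, for otherwise $K\in\KI\cap\KIV$ forces $K$ to be compact and $\{1\}$ is a normalized frame in $C(K)$; in particular, $K$ is not $\s$-compact.

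First, the equivalent characterization 2.1 of $\KIV$ established in \S2 forces every element of $C_0(K)$ to be constant---and hence identically zero---outside some compact set, so each frame element $x_j$ has compact support $S_j\subset K$. Fix $t\in K$ and choose, via Lemma \ref{extrest}, a function $x\in C_0(K)$ with $x(t)\ne0$; applying Lemma \ref{fr_cr} with the state $\f=\mathrm{ev}_t$ yields the pointwise estimate
$$c_1\le\sum_{j\in J}|x_j(t)|^2\le c_2\qquad\text{for every }t\in K.$$
Consequently only countably many $x_j$ are nonzero at each $t$, and the open sets $U_j=\{x_j\ne0\}\subseteq S_j$ form an open cover of $K$.

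The core of the proof is the extraction from $\{x_j\}_{j\in J}$ of a countable subfamily $\{x_{j_n}\}_{n\in\N}$ with $\sum_n|x_{j_n}(t)|^2>0$ at every $t\in K$. Once such a subfamily is obtained, $\bigcup_n S_{j_n}$ is $\s$-compact, so by $\KIV$ its closure is compact, and since $K$ is non-compact it is a proper subset of $K$; at any $t_0$ outside this closure all the $x_{j_n}(t_0)$ vanish, directly contradicting the required positivity---an instance of the obstruction captured by Lemma \ref{frp1}, namely that no countable family in a non-$\s$-compact locally compact space can be positive at every point.

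The main obstacle is precisely this extraction. The naive Lindel\"of-style argument is unavailable because a $\KIV$-space is typically non-Lindel\"of (e.g.\ $[0,\omega_1)$), and a transfinite recursion---choose $t_0$, absorb the countable set $J(t_0)=\{j:x_j(t_0)\ne0\}$, form the compact closure of $\bigcup_{j\in J(t_0)}S_j$ via $\KIV$, pick a point outside, and iterate---may only terminate at stage $\omega_1$. The cleaner route I would pursue exploits that in $\KIV$ every state of $C_0(K)$ is represented by a Radon probability measure with compact support, and the Bessel-type estimate $\sum_j\int|x_j|^2\,d\mu\le c_2$ obtained by specializing Lemma \ref{fr_cr} forces $\{j:\int|x_j|^2\,d\mu>0\}$ to be countable for each such $\mu$; patching these countable sets coming from a suitably chosen family of measures into a single countable subfamily that still ``sees'' every point of $K$ is the principal technical step of the argument.
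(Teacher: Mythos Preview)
Your proposal has a genuine gap: the ``principal technical step'' you name---extracting a single countable subfamily $\{x_{j_n}\}$ with $\sum_n|x_{j_n}(t)|^2>0$ at every $t\in K$---is never carried out, and in fact there is no reason to expect it can be. You correctly observe that the Lindel\"of route is closed, and the alternative you sketch (patching together the countable sets $\{j:\int|x_j|^2\,d\mu>0\}$ over a ``suitably chosen family of measures'') just pushes the problem back one level: to make the patched family countable you would need a countable family of measures whose supports cover $K$, which is exactly $\s$-compactness of $K$ again. So as written the argument is circular at its decisive point.

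The paper's proof avoids this trap by \emph{not} attempting to cover $K$ with countably many frame elements. Instead it builds, inductively, a sequence of points $t_1,t_2,\dots$ together with compacts $K_1,K_2,\dots$ and pairwise disjoint countable index sets $J_1,J_2,\dots$ such that $J_n=\{j:x_j(t_n)\ne 0\}$, every $x_j$ with $j\in J_n$ vanishes outside $K_n$, and $t_{n+1}\notin\bigcup_{l\le n}K_l$. The countable set $\{t_n\}$ is $\s$-compact, so by the $\KIV$ hypothesis it is precompact and has a limit point $t_0$. A short case analysis then shows $x_j(t_0)=0$ for every $j$: if $j\notin\bigcup_l J_l$ then $x_j$ vanishes on the whole sequence, and if $j\in J_k$ then $x_j$ vanishes on the tail $\{t_n\}_{n>k}$ (since those points lie outside $K_k$); continuity finishes both cases. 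This contradicts the lower frame bound at $t_0$. The moral is that $\KIV$ is used not to bound the supports of a countable subfamily of the frame, but to force the specially constructed sequence $\{t_n\}$ itself to accumulate.
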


\begin{proof}

Assume that there exists a frame $\{x_j\}_{j\in J}$ in $C_0(K)$.
Take an arbitrary point $t_1\in K$.
There exists a function $g_1\in C_0(K)$ such that $g_1(t_1)=1$, $|g_1(t)|\le1$ on $K$.
There is a non-empty at most countable set $\{x_j\}_{j\in J_1}$ of elements of the frame such that $x_j(t_1)\ne0$ because by
taking $x=g_1$ and $\f={\delta_{t_1}}$ --- evaluation at the point $t_1$, due to lemma \ref{fr_cr} we have that
the series $\sum\limits_j \<g_1,x_j\>\<x_j,g_1\>(t)=\sum\limits_j |x_j(t)|^2$ converges at the point $t_1$. That is, if $j\in J\setminus J_1$, then $x_j(t_1)=0$.
For every $j\in J_1$ there is a compact set $K_{1,j}\subset K$ such that $x_j=0$ outside $K_{1,j}$. $J_1$ is at most a countable set, so there is a compact set $K_1$ such that $\bigcup\limits_{j\in J_1}K_{1,j}\subset K_1$. That is, $x_j=0$ outside $K_1$ for any $j\in J_1$.

Assume that we have already found points $t_1,\dots,t_n$, compact sets $K_1,\dots,K_n$ and index sets
$J_1,\dots,J_n\subset J$ such that $t_i\notin \bigcup\limits_{l=1}^{i-1}K_l$ for $i=2,\dots,n$, $x_j (t_l)\ne0$ only for $j\in J_l$ (as a consequence, different sets $J_l$ do not intersect), $x_j=0$ outside $K_l$ for $j\in J_l$.

Take an arbitrary point $t_{n+1}\in K\setminus\bigcup\limits_{l=1}^{n}K_l$. As in the case when $n=1$, there exists a non-empty at most countable set $\{x_j\}_{j\in J_{n+1}}$ of elements of the frame such that $x_j(t_{n+1})\ne0$ for $j\in J_{n+1}$ (and hence $J_{n+1}$ does not intersect any $J_{l}$, $l=1,\dots,n$, since the functions $x_j$ for $j\in \bigcup\limits_{l=1}^{n}J_l$ vanishes outside $\bigcup\limits_{l=1}^{n}K_l$).
There also exists a compact set $K_{n+1}$ such that $x_j=0$ outside $K_{n+1}$ for any $j\in J_{n+1}$. By induction, we can continue this construction for any $n\in\N$.

The sequence $\{t_n\}_{n\in\N}$ is a $\s$-compact set, so there exists a compact set $K'$ containing this sequence.

Hence the sequence has a limit point $t_0\in K'$. Let us show that $x_j(t_0)=0$ for all $j\in J$, which will contradict the fact that $\{x_j\}_{j\in J}$ is a frame.

First let it be that $j\in J\setminus\bigcup\limits_{l=1}^{\infty}J_{l}$. Then $x_j(t_n)=0$ for all $n\in\N$. Hence,
$x_j(t_0)=0$, because otherwise if $x_j(t_0)=q\ne0$ then in any neighborhood of the point $t_0$ there is a point $t_n$ such that $|x_j(t_0)-x_j(t_n)|=|q|>0$ --- a contradiction with the continuity of $x_j$.

Let now $j\in\bigcup\limits_{l=1}^{\infty}J_{l}$, i.e. $j\in J_k$ for some $k\in\N$, and suppose that $x_j(t_0)\ne0$. Then $t_0\in K_k$ (because $x_j=0$ outside $K_k$). Hence, $x_j(t_{k+l})=0$ for all $l\in\N$ (because $t_{k+l}\notin K_k$) and $t_0$ is still a limit point for the sequence $\{t_n\}_{n=k+1}^\infty$, and then $x_j(t_0)=0$ similarly to the previous case.

Hence $\{x_j\}_{j\in J}$ is not a frame. Q.E.D.

\end{proof}

\end{document}